\numberwithin{equation}{section}
\newtheorem{theorem}{Theorem}[section]
\newtheorem{proposition}[theorem]{Proposition}
\newtheorem{corollary}[theorem]{Corollary}
\newtheorem{lemma}[theorem]{Lemma}
\theoremstyle{definition}
\newtheorem{definition}[theorem]{Definition}
\let\oldmarginpar\marginpar
\renewcommand\marginpar[1]{\-\oldmarginpar[\raggedleft\small\sf
#1]{\raggedright\small\sf #1}}
\newcommand{\myAA}{\mathcal{A}}
\newcommand{\FF}{\mathcal{F}}
\newcommand{\ZZ}{\mathbb{Z}}
\newcommand{\QQ}{\mathbb{Q}}
\newcommand{\mat}[4]{\left(\!\!\begin{array}{cc}
#1 & #2 \\ #3 & #4 \\
\end{array}\!\!\right)}
\newcommand{\rem}[1]{\left\langle#1\right\rangle}
\newcommand{\myx}{x_{b\leftrightarrow c}}
\newcommand{\myPS}{PS_{b\leftrightarrow c}}
\begin{document}

\title[Positivity and tameness in rank 2 cluster algebras]
{Positivity and tameness in rank 2 cluster algebras}

\author{Kyungyong Lee}
\address{\noindent Department of Mathematics, Wayne State University, Detroit, MI 48202, USA}
\email{klee@math.wayne.edu}

\author{Li Li}
\address{\noindent Department of Mathematics and Statistics, Oakland University, Rochester, MI 48309, USA}
\email{li2345@oakland.edu}

\author{Andrei Zelevinsky}
\address{\noindent Department of Mathematics, Northeastern University,
Boston, MA 02115, USA}
\email{andrei@neu.edu}

\subjclass[2010]{Primary 13F60}

\date{March 22, 2013}

\thanks{Research 
supported in part by NSF grants DMS-0901367 (K.~L.) and DMS-1103813 (A.~Z.)}

\begin{abstract}
We study the relationship between the positivity property in a rank 2 cluster algebra, and the property of such an algebra to be tame.
More precisely, we show that a rank 2 cluster
 algebra has a basis of indecomposable positive elements if and only if it
 is of finite or affine type. This statement 
disagrees with a conjecture by Fock and Goncharov.
 \end{abstract}

\maketitle

\bigskip


\section{Introduction and main results}
\label{sec:intro}

This note continues the study of the positivity structure for coefficient-free rank~$2$ cluster algebras initiated in \cite{sz,llz}. 
These algebras can be quickly defined as follows: they form a $2$-parametric family depending on a pair of positive integers~$(b,c)$, and the cluster algebra $\myAA(b,c)$ is the
subring of the \emph{ambient field} $\FF = \QQ(x_1,x_2)$ generated by the \emph{cluster variables} $x_m$ for all $m \in \ZZ$, where the (two-sided) sequence of cluster variables is given recursively by the relations
\begin{equation}
\label{eq:clusterrelations}
x_{m-1} x_{m+1} = \left\{
\begin{array}[h]{ll}
x_m^b + 1 & \quad \mbox{for  $m$ odd;} \\
x_m^c + 1 & \quad \mbox{for $m$ even.}
\end{array}
\right.
\end{equation} 
Recall also that the sets $\{x_m,x_{m+1}\}$ for
$m\in\ZZ$ are called \emph{clusters}, and the ambient field $\FF$ is naturally identified with $\QQ(x_m,x_{m+1})$ for any $m \in \ZZ$.

Despite such an elementary definition, we find the structure theory of these algebras rather deep, and some of the natural questions surprisingly difficult. 
Here is a fundamental result, which is a special case of the Laurent phenomenon discovered and proved in \cite{fz-ClusterI,fz-Laurent,fz-ClusterIII}: every cluster variable is not just a rational function in the two elements of any given cluster but a Laurent polynomials with integer coefficients. The
following stronger result is a special case of the results
in~\cite{fz-ClusterIII}:
\begin{equation}
\label{eq:upper} 
\myAA = \bigcap_{m \in \ZZ} \ZZ[x_m^{\pm 1},
x_{m+1}^{\pm 1}],
\end{equation}
where $\ZZ[x_m^{\pm 1}, x_{m+1}^{\pm 1}]$ denotes the ring of
Laurent polynomials with integer coefficients in $x_m$ and
$x_{m+1}$.

The  main focus in our study of the algebras $\myAA(b,c)$ is \emph{positivity}. Recall that a non-zero element $x\in\myAA(b,c)$ is
\emph{positive at a cluster $\{x_m,x_{m+1}\}$} if all the coefficients in
the expansion of $x$ as a Laurent polynomial in $x_m$ and
$x_{m+1}$ are positive.
We say that $x \in\myAA(b,c)$ is \emph{positive} if it is positive at all the clusters. 
 
Following \cite{sz}, we introduce the following 
important definition. 

\begin{definition}
\label{de:positive-indecomposables} 
A positive element $x\in\myAA(b,c)$ is
\emph{indecomposable} if it cannot be expressed as the sum of two positive elements. 
\end{definition}

Recall that $\myAA(b,c)$ is of \emph{finite} (resp. \emph{affine}) type if $bc \leq 3$ (resp. $bc = 4$). 
It is also common to refer to the case $bc \leq 4$ as \emph{tame}, and that of $bc > 4$ as \emph{wild} (this terminology comes from the theory of quiver representations). 
One of the main results of \cite{sz} is the following: if $\myAA(b,c)$ is tame then indecomposable positive elements form 
a $\ZZ$-basis in $\myAA(b,c)$.

Motivated by this result (among other considerations) it was conjectured in \cite[Conjecture~5.1]{fg} that indecomposable positive elements form  a $\ZZ$-basis in any cluster algebra. However, already the authors of \cite{sz} suspected that this is not true, and this suspicion was detailed and stated explicitly in \cite{llz}. Here we finally settle this question by proving the following. 

\begin{theorem}
\label{th:positivity-tameness}
The set of indecomposable positive elements forms a $\ZZ$-basis for $\mathcal{A}(b,c)$ if and only if $bc \leq 4$ i.e. $\mathcal{A}(b,c)$ is tame.
\end{theorem}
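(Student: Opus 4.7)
The forward direction is exactly the theorem of Sherman--Zelevinsky quoted above: for $bc\leq 4$, the indecomposable positive elements form a $\ZZ$-basis of $\mathcal{A}(b,c)$. The plan is therefore to establish the converse by showing that when $bc>4$, the indecomposable positive elements fail to be $\ZZ$-linearly independent. Concretely, I would aim to exhibit an equality of the form
\[
u_1+u_2+\cdots+u_k \;=\; v_1+v_2+\cdots+v_\ell
\]
in $\mathcal{A}(b,c)$ in which every $u_i$ and $v_j$ is an indecomposable positive element but the multisets $\{u_i\}$ and $\{v_j\}$ are distinct. Such an equality yields a nontrivial $\ZZ$-linear relation among indecomposable positives, contradicting the basis property.

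The construction would be guided by the greedy basis of $\mathcal{A}(b,c)$ developed in \cite{llz}, which is known to be a $\ZZ$-basis consisting of positive elements indexed by $\ZZ^2$. In the wild regime $bc>4$, the Newton polygon of a well-chosen greedy element $P=x[a_1,a_2]$ carries enough interior room to admit two distinct splittings $P=y_1+y_2=y_1'+y_2'$ as sums of positive elements of $\mathcal{A}(b,c)$. Using the explicit combinatorial formula for greedy coefficients from \cite{llz}, such candidate splittings can be written down and verified in the initial cluster $\{x_1,x_2\}$. Refining each of $y_1,y_2,y_1',y_2'$ further into sums of indecomposable positives then produces two distinct expressions of $P$ as sums of indecomposables, and the desired relation follows.

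The main technical obstacle is verifying positivity of the intermediate pieces $y_1,y_2,y_1',y_2'$ in \emph{every} cluster $\{x_m,x_{m+1}\}$, not merely in the initial one, so that they genuinely belong to $\mathcal{A}(b,c)$. This requires propagating positivity through the infinite mutation sequence via the exchange recursion~(\ref{eq:clusterrelations}) together with structural control of the greedy coefficients from \cite{llz}. The condition $bc>4$ is precisely what creates the combinatorial room needed; for $bc\leq 4$ the tight structure of tame cluster algebras forces the two refinements to coincide, consistent with the Sherman--Zelevinsky basis property.
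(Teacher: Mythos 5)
Your reduction of the theorem to exhibiting a nontrivial $\ZZ$-linear relation among indecomposable positive elements in the wild case is the right framing (it is condition (2) in the paper's chain of equivalences), and the forward direction via Sherman--Zelevinsky is correct. But the central construction you propose cannot work: you want to split a greedy element $P=x[a_1,a_2]$ as $P=y_1+y_2$ with $y_1,y_2$ positive. Greedy elements are themselves \emph{indecomposable} positive --- this is one of the main results of \cite{llz}, quoted in the introduction --- so by definition no greedy element admits any expression as a sum of two positive elements, let alone two distinct such expressions. No amount of ``interior room'' in the Newton polygon can circumvent this; the obstruction is the definition of indecomposability, not a coefficient computation.

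The repair is to start not from a greedy element but from a positive element whose expansion in the greedy \emph{basis} has a negative coefficient. The paper takes $p=x[bc-b+1,c+1]+x[b+1,bc-c+1]-x[1,1]$ for $\min(b,c)>1$ (with analogous formulas when $b=1$ or $c=1$) and proves that $p$ is positive at every cluster. Once this is done the linear relation falls out formally: any decomposition of $p$ into indecomposable positives is a genuine sum, hence differs from the greedy expansion, which carries the coefficient $-1$ on $x[1,1]$; so some indecomposable positive component of $p$ is non-greedy, and together with the greedy elements (indecomposable positive and already a $\ZZ$-basis) this yields a linearly dependent family of indecomposable positives. You correctly identify that the hard part is verifying positivity in every cluster, but the paper's mechanism for this is essential to know: all greedy elements occurring in $p$ are indexed by positive imaginary roots, the dihedral group $W$ generated by $\sigma_1,\sigma_2$ acts transitively on clusters and acts on such greedy elements through the Weyl group action on imaginary roots (Proposition \ref{pr:sigma-w}), so positivity everywhere reduces to positivity of every $W$-translate of $p$ at the initial cluster; that in turn is proved by the combinatorics of compatible pairs and the exact description of the support of $x[a_1,a_2]$ for imaginary $(a_1,a_2)$ (Proposition \ref{prop:support}). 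Propagating positivity ``through the infinite mutation sequence via the exchange recursion,'' as you suggest, is not how the argument goes and would be far harder to control.
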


Recall that in \cite{llz} for each $(b,c)$ there was introduced a family $\{x[a_1,a_2] \ : \ (a_1,a_2) \in \ZZ^2\}$ of \emph{greedy elements} in $\myAA(b,c)$, and it was proved among other things that they are indecomposable positive, and that they form a $\ZZ$-basis in $\myAA(b,c)$. Taking this into account, it is easy to see that the following conditions on $(b,c)$ are equivalent:

\begin{enumerate}
\item Indecomposable positive elements do not form a $\ZZ$-basis for $\mathcal{A}(b,c)$. 

\item Indecomposable positive elements in $\myAA(b,c)$ are linearly dependent.

\item There exists a non-greedy indecomposable positive element in $\myAA(b,c)$.

\item  There exists a positive element $p \in \myAA(b,c)$ whose expansion in the basis of greedy elements has at least one negative coefficient. 
\end{enumerate} 

For instance, to deduce (3) from (4) take any expansion of $p$ into the sum of indecomposable positive elements, and note that it is \emph{different} from the expansion of $p$ in the basis of greedy elements. Hence at least one of indecomposable positive components of $p$ must be non-greedy. 

We see that to prove Theorem~\ref{th:positivity-tameness} it suffices to show that in the wild case there always exists an element~$p$ satisfying (4). We exhibit such an element explicitly as follows. 

\begin{theorem} 
\label{th:p-explicit}
Suppose that $bc > 4$, i.e., $\myAA(b,c)$ is wild. Define an element
$p \in \myAA(b,c)$ as follows:
\begin{equation}
\label{eq:p-explicit}
p = \left\{
\begin{array}[h]{ll}
x[bc-b+1,c+1]+x[b+1,bc-c+1]-x[1,1] & \, \mbox{if $\min(b,c)>1$;} \\
x[b+2,3] + x[b+2, b-1] - x[2,1] & \, \mbox{if $c=1$;} \\
x[3,c+2] + x[c-1, c+2] - x[1,2] & \, \mbox{if $b=1$.}
\end{array}
\right.
\end{equation} 
Then $p$ is positive, hence satisfies condition (4) above.
\end{theorem}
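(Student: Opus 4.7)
The strategy is to verify positivity of $p$ at every cluster $\{x_m, x_{m+1}\}$ directly, exploiting the explicit combinatorial structure of the greedy basis from \cite{llz}. The $(b,c) \leftrightarrow (c,b)$ symmetry of $\myAA(b,c)$---which swaps $x_1 \leftrightarrow x_2$ and reindexes the cluster sequence---exchanges the case $b=1$ with the case $c=1$ in \eqref{eq:p-explicit}, so only $\min(b,c)>1$ and $c=1$ need be treated. In either case one writes each of the three greedy elements in \eqref{eq:p-explicit} in the form
\[
x[a_1,a_2] \;=\; x_1^{-a_1} x_2^{-a_2} \sum_{p,q \ge 0} e_{a_1,a_2}(p,q)\, x_1^{bp} x_2^{cq},
\]
where $e_{a_1,a_2}(p,q)$ are the nonnegative greedy coefficients characterized recursively in \cite{llz}.

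For positivity at the initial cluster $\{x_1,x_2\}$ in the main case $\min(b,c)>1$, I would observe that $-x[1,1]$ subtracts only from monomials of the form $x_1^{-1+bp} x_2^{-1+cq}$; matching these against monomials in the two larger greedy elements shows that positivity at this cluster is equivalent to the single family of inequalities
\[
e_{bc-b+1,\,c+1}(p+c-1,\, q+1) \;+\; e_{b+1,\,bc-c+1}(p+1,\, q+b-1) \;\ge\; e_{1,1}(p,q)
\]
for all $p,q \ge 0$. The denominator vectors $(bc-b+1,c+1)$ and $(b+1,bc-c+1)$ are positioned symmetrically around $(1,1)$ so that each monomial contributing to the right-hand side is matched by a ``corner'' monomial of one of the two larger greedy elements; establishing the above inequalities from the greedy recursion of \cite{llz} is the heart of the argument.

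For a general cluster $\{x_m, x_{m+1}\}$ with $m \ne 1$, each $x[a_1,a_2]$ expands by iterating \eqref{eq:clusterrelations}. When $|m|$ is sufficiently large relative to the three denominator vectors the Laurent supports of the three summands become disjoint, so no cancellation can occur and positivity is automatic; only finitely many clusters near the initial one require scrutiny, and by the reflection symmetry of the doubly-infinite cluster sequence these reduce---after reindexing and the swap $b \leftrightarrow c$---to the initial-cluster computation already carried out. The main obstacle will be the uniform combinatorial verification of the displayed inequality across all $(b,c)$ with $bc>4$: the greedy recursion is intricate, and a clean argument will likely require either closed-form expressions for $e_{1,1}(p,q)$ and the relevant boundary values of $e_{bc-b+1,c+1}$ and $e_{b+1,bc-c+1}$, or a structural comparison built directly into the recursion. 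The degenerate cases $b=1$ or $c=1$, where \eqref{eq:p-explicit} takes a different form and the greedy recursion simplifies, must be handled by a parallel but separate calculation along the same lines.
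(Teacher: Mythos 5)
There is a genuine gap, in fact two. First, your reduction of the non-initial clusters is based on a false premise. Positivity of $p$ at a cluster $\{x_m,x_{m+1}\}$ is equivalent, via the transitive action of the automorphism group $W$, to positivity at the initial cluster of the element $x[w(a)]+x[w(a')]-x[w(a'')]$, where $w$ ranges over the infinite dihedral group and $a,a',a''$ are the three denominator vectors in \eqref{eq:p-explicit}. All three of these vectors are positive imaginary roots, the imaginary cone is $W$-invariant, and the pointed supports of the transformed greedy elements do \emph{not} become disjoint as $w$ gets long --- quite the opposite: the paper's Lemma~\ref{support comparison preparation} shows that the support of the subtracted term $x[w(a'')]$ is, up to a single lattice point, contained in the appropriately translated support of one of the two positive terms, for \emph{every} $w$. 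So cancellation threatens every monomial at every cluster, there are infinitely many genuinely distinct verifications to perform (one for each $k$ in the recursively defined root sequences $\alpha(k),\beta(k),\gamma(k)$ of Definition~\ref{definition:alpha}), and the reflection symmetry of the cluster sequence does not fold them back onto the initial case because $p$ is not $W$-invariant. Your "only finitely many clusters require scrutiny" step would fail outright.

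Second, even at the initial cluster, the displayed coefficient inequality --- which is correctly set up --- is exactly the hard part, and you leave it unproven while acknowledging it as "the main obstacle." The paper's resolution requires two nontrivial inputs that have no counterpart in your sketch: (i) the exact determination of the pointed support of $x[a_1,a_2]$ for a positive imaginary root $(a_1,a_2)$ as the full set of lattice points of the region $P$ (Proposition~\ref{prop:support}, proved via compatibility of extremal pairs), which supplies the "one missing monomial" $x_1^{\alpha(k-2)}x_2^{-\alpha(k-1)}$ from the third greedy element; and (ii) an explicit monomial-preserving injection $\mu$ from compatible pairs in $\mathcal{D}^{\alpha(k)\times\alpha(k-1)}$ (minus one pair) into compatible pairs in $\mathcal{D}^{\beta(k)\times\beta(k-1)}$ (Lemma~\ref{mu}), which is what actually establishes the coefficient domination uniformly in $k$ and in $(b,c)$. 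A closed form for the greedy coefficients $e_{1,1}(p,q)$, which you suggest as one possible route, is not available in general; the combinatorial injection is the substitute. As it stands your proposal is a correct framing of what must be shown at one cluster, but not a proof of the theorem.
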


To show that the element $p$ given by \eqref{eq:p-explicit} is positive, we use the group of automorphisms $W$ of $\myAA(b,c)$ introduced in \cite{llz}. By the definition, $W$ is generated by the involutions $\sigma_\ell$ for $\ell \in \ZZ$, where $\sigma_\ell$ acts on cluster variables by a permutation $\sigma_\ell(x_m) = x_{2 \ell-m}$.  It is easy to see that $W$ is a dihedral group generated by $\sigma_1$ and $\sigma_2$ (this group is finite if $\myAA$ is of finite type, and infinite otherwise). As shown in \cite[Proposition~1.8]{llz}, the set of greedy elements is $W$-invariant, and  the automorphisms $\sigma_1$ and
$\sigma_2$ act on greedy elements as follows:
\begin{equation}
\label{eq:sigma-Q} 
\sigma_1(x[a_1, a_2]) = x[a_1, c [a_1]_+ - a_2], \quad 
\sigma_2 (x[a_1, a_2]) = x[b [a_2]_+ - a_1, a_2] 
\end{equation}
for all $(a_1, a_2) \in \ZZ^2$, where we use the standard notation $[a]_+ =  \max (a, 0)$.

Clearly, $W$ acts transitively on the set of all clusters of $\myAA(b,c)$. 
Thus the positivity of~$p$ is equivalent to the property that $\sigma(p)$
is positive at the initial  cluster $\{x_1,x_2\}$ for every $\sigma \in W$. 

We identify $\ZZ^2$ with the root lattice associated with the (generalized) Cartan matrix
\begin{equation}
\label{eq:cartan-rank2}
A=A(b,c)=\mat{2}{-b}{-c}{2} 
\end{equation} 
using an unorthodox convention that the simple roots $\alpha_1$ and $\alpha_2$ are identified with $(0,1)$ and $(1,0)$ respectively.  
The \emph{Weyl group} $W(A)$ is a group of linear
transformations of $\mathbb{Z}^2$ generated by two \emph{simple reflections} $s_1$ and $s_2$
whose action in $\ZZ^2$ is given by
\begin{equation}
\label{eq:s1-s2}
s_1 = \mat{1}{0}{c}{-1} \ ,\quad
s_2 = \mat{-1}{b}{0}{1}\ .
\end{equation}
Comparing this with \eqref{eq:sigma-Q} we see that $s_1$ agrees with $\sigma_1$, and $s_2$ agrees with $\sigma_2$ on $\ZZ_{\geq 0}^2$.

It is well-known (see e.g., \cite{kac}) that the difference between tame and wild cases manifests itself in the appearance and behavior of \emph{imaginary roots}. 
According to \cite{kac}, the set $\Phi^{\rm im}_+$ of \emph{positive imaginary roots} can be defined as follows:
\begin{equation}
\label{eq:imaginary-roots}
\Phi^{\rm im}_+ = \{(a_1,a_2) \in (\ZZ_{> 0})^2 \, : \,
Q(a_1, a_2) \leq 0\} \,
\end{equation}   
where $Q$ is the $W(A)$-invariant quadratic form on $\ZZ^2$  given by
\begin{equation}
\label{eq:scalar-product}
Q(a_1, a_2) =
c a_1^2 - bc a_1 a_2 + b a_2^2 \ .
\end{equation}
It is known (and easy to check) that:
\begin{itemize} 
\item in the finite type $bc < 4$ the form $Q$ is positive definite, and so $\Phi^{\rm im}_+ = \emptyset$; 
\item in the affine type $bc = 4$, we have $Q(a_1, a_2) \geq 0$ on $\ZZ^2$, and  $\Phi^{\rm im}_+ = \{(a_1,a_2) \in \ZZ_{> 0} \, : \,
Q(a_1, a_2)) = 0\}$ is the set of integer positive multiples of the \emph{minimal} imaginary root;
\item in the wild case $bc > 4$, the form $Q$ does not vanish on $\ZZ^2$, and we have 
$$\aligned
\label{eq:imaginary-inequalities}
\Phi^{\rm im}_+ & = \{(a_1,a_2) \in (\ZZ_{> 0})^2 \, : \,
Q(a_1, a_2) < 0\}\\
& = 
 \{(a_1,a_2) \in (\ZZ_{> 0})^2 \, : \,
\frac{bc - \sqrt{bc(bc-4)}}{2b} < \frac{a_2}{a_1} <
\frac{bc + \sqrt{bc(bc-4)}}{2b}\} \ .
\endaligned$$
\end{itemize}   

In this note our main interest is in the wild case. 
In this case we can and will identify the group $W$ of automorphisms of $\myAA(b,c)$ generated by $\sigma_1$ and $\sigma_2$ with the Weyl group $W(A)$ by identifying $\sigma_1$ with $s_1$, and $\sigma_2$ with $s_2$. 
The above facts imply that $\Phi^{\rm im}_+$ is $W(A)$-invariant, and we have the following useful property.

\begin{proposition} 
\label{pr:sigma-w}
In the wild case $bc > 4$, if $\sigma \in W$ and $w \in W(A)$ are identified with each other, and $(a_1,a_2) \in \Phi^{\rm im}_+$ then we have $\sigma(x[a_1,a_2]) = x[w(a_1,a_2)]$. 
\end{proposition}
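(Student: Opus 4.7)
The plan is to induct on the length of a (reduced) expression of $w$ in the generators $s_1, s_2$ of $W(A)$, leveraging the two facts that were emphasized just above the proposition: (a) the strict positivity of imaginary roots, i.e.\ $\Phi^{\rm im}_+\subseteq (\ZZ_{>0})^2$; and (b) the $W(A)$-invariance of $\Phi^{\rm im}_+$. The point is that \eqref{eq:sigma-Q} and \eqref{eq:s1-s2} coincide on $(\ZZ_{\geq 0})^2$ because the truncation $[\cdot]_+$ is inactive there; so as long as every intermediate point of the orbit of $(a_1,a_2)$ stays in $(\ZZ_{>0})^2$, the action of $\sigma$ on $x[a_1,a_2]$ matches that of $w$ on $(a_1,a_2)$ step by step.

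Concretely, fix a factorization $w = s_{i_k}s_{i_{k-1}}\cdots s_{i_1}$ and the corresponding $\sigma = \sigma_{i_k}\sigma_{i_{k-1}}\cdots \sigma_{i_1}$ via the identification $\sigma_i\leftrightarrow s_i$. For $j=0,1,\ldots,k$, set $w_j := s_{i_j}\cdots s_{i_1}$ (with $w_0 = e$) and $\sigma_{[j]} := \sigma_{i_j}\cdots \sigma_{i_1}$. I would prove by induction on $j$ the statement
\[
\sigma_{[j]}\bigl(x[a_1,a_2]\bigr) \;=\; x\bigl[w_j(a_1,a_2)\bigr].
\]
The base $j=0$ is trivial. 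Assume the equality holds for $j-1$, and let $(b_1,b_2):= w_{j-1}(a_1,a_2)$. Since $(a_1,a_2)\in \Phi^{\rm im}_+$ and $\Phi^{\rm im}_+$ is $W(A)$-invariant, $(b_1,b_2)\in \Phi^{\rm im}_+\subseteq (\ZZ_{>0})^2$. Hence $[b_1]_+ = b_1$ and $[b_2]_+ = b_2$, so comparing \eqref{eq:sigma-Q} with \eqref{eq:s1-s2} gives $\sigma_{i_j}(x[b_1,b_2]) = x[s_{i_j}(b_1,b_2)]$. Applying this to $\sigma_{[j]}(x[a_1,a_2]) = \sigma_{i_j}(x[b_1,b_2])$ yields $x[w_j(a_1,a_2)]$, completing the induction. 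The case $j=k$ is the desired statement, and it depends only on the group element $w$ (not on the chosen factorization) since the right-hand side $x[w(a_1,a_2)]$ is intrinsic.

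The only conceivable obstacle is the discrepancy between $\sigma_i$ and $s_i$ on greedy elements labelled by vectors lying outside $(\ZZ_{\geq 0})^2$, caused by the $[\cdot]_+$ in \eqref{eq:sigma-Q}. This obstacle is precisely what makes the proposition fail for general $(a_1,a_2)\in \ZZ^2$ (for a real root, some intermediate $w_{j-1}(a_1,a_2)$ typically lands outside the positive orthant). The imaginary-root hypothesis is used exclusively to sidestep this: the combination of strict positivity \eqref{eq:imaginary-roots} with $W(A)$-invariance of the quadratic form $Q$ from \eqref{eq:scalar-product} guarantees that the entire orbit stays inside $(\ZZ_{>0})^2$, and the induction then runs without interference.
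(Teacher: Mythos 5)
Your proof is correct and is precisely the argument the paper intends: the paper states the proposition without a formal proof, presenting it as an immediate consequence of the facts recalled just before it (that $\sigma_i$ and $s_i$ agree on $\ZZ_{\geq 0}^2$ and that $\Phi^{\rm im}_+\subseteq(\ZZ_{>0})^2$ is $W(A)$-invariant). Your induction on the word length simply makes that implicit step-by-step argument explicit.
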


Returning to Theorem~\ref{th:p-explicit}, it is easy to check that all the elements $x[a_1,a_2]$ appearing in the right hand side of \eqref{eq:p-explicit} correspond to positive imaginary roots (see Lemma \ref{lem:imaginary-components}). 
Thus to prove Theorems~\ref{th:positivity-tameness} and 
~\ref{th:p-explicit} it suffices to establish the following key lemma.

\begin{lemma}
\label{lem:key-lemma}
In the setup of Theorem~\ref{th:p-explicit}, for every $w \in W$, the element obtained from $p$ by replacing each element $x[a_1,a_2]$ in the right hand side of \eqref{eq:p-explicit} with $x[w(a_1,a_2)]$
 is positive at the initial  cluster $\{x_1,x_2\}$.
 \end{lemma}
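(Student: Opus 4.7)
Let $p_w$ denote the element obtained from $p$ by the substitution in the statement. By Lemma \ref{lem:imaginary-components}, the three indexing vectors $\alpha_0,\beta_0,\gamma_0$ appearing in \eqref{eq:p-explicit} all lie in $\Phi^{\rm im}_+$. Since $\Phi^{\rm im}_+$ is cut out from $\ZZ^2_{>0}$ by the $W(A)$-invariant condition $Q\le 0$, it is $W(A)$-invariant, so each of $w\alpha_0$, $w\beta_0$, $w\gamma_0$ is again a positive imaginary root, in particular a vector in $\ZZ^2_{>0}$. Hence Proposition \ref{pr:sigma-w} applies and the task reduces to showing that, for each $w\in W$,
\[
p_w \;=\; x[w\alpha_0] \;+\; x[w\beta_0] \;-\; x[w\gamma_0]
\]
has non-negative coefficients when expanded as a Laurent polynomial in $x_1,x_2$, using the uniform combinatorial formula for greedy elements at positive indices from \cite{llz}. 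A clean identity, easily verified in each of the three cases of \eqref{eq:p-explicit} and preserved under the linear $W(A)$-action, is
\[
\alpha_0 + \beta_0 \;=\; \lambda\,\gamma_0, \qquad \lambda \in \{bc+2,\; b+2,\; c+2\},
\]
which ties the three indexing vectors tightly together for any choice of $w$.

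In the wild case $W$ is infinite dihedral, so every $w$ is one of $(s_1s_2)^k$, $(s_2s_1)^k$, $s_1(s_2s_1)^k$, or $s_2(s_1s_2)^k$ for some $k\ge 0$. The symmetry of the setup under the simultaneous exchange $(b,x_1,s_1)\leftrightarrow(c,x_2,s_2)$ (which swaps $\alpha_0\leftrightarrow\beta_0$ and fixes $\gamma_0$) reduces us to $w=(s_2s_1)^k$ and $w=s_1(s_2s_1)^k$ for $k\ge 0$, and I would proceed by induction on $k$. For the base cases $w\in\{e,s_1\}$, I would expand each of $x[w\alpha_0],x[w\beta_0],x[w\gamma_0]$ as a Laurent polynomial in $x_1,x_2$ via the explicit combinatorial greedy formula from \cite{llz}, and verify directly that every monomial contribution to $x[w\gamma_0]$ is dominated by a matching contribution to $x[w\alpha_0]+x[w\beta_0]$. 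For the inductive step I would use the explicit matrix action \eqref{eq:s1-s2} of $s_1,s_2$ on root coordinates, combined with the uniform greedy formula, to recursively relate the expansion of $p_w$ to that of $p_{s_i w}$ for the appropriate $i$.

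The principal obstacle is executing this monomial-by-monomial comparison. The greedy coefficients are defined by a recursive ``max of two sums'' rule involving binomial coefficients, so their precise values are intricate; furthermore, the Newton polytope of each greedy element shifts with $w$, so the matching must be redone at each length rather than established once and for all. I expect the resolution to hinge on an explicit combinatorial injection (or sign-reversing involution) that uses the identity $\alpha_0+\beta_0=\lambda\,\gamma_0$ to absorb each contributing monomial of $x[w\gamma_0]$ into a monomial with strictly larger coefficient drawn from either $x[w\alpha_0]$ or $x[w\beta_0]$.
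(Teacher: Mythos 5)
Your setup is sound: the reduction via Proposition \ref{pr:sigma-w} to a coefficient-wise comparison of greedy elements indexed by imaginary roots, the observation that $W$-invariance of $\Phi^{\rm im}_+$ keeps all three indices in $\ZZ^2_{>0}$, and the identity $\alpha_0+\beta_0=\lambda\gamma_0$ are all correct (the last is true but is not what the argument ultimately turns on). The gap is that the entire content of the lemma lies in the step you defer: you never construct the injection, and the inductive scheme you propose for producing it cannot work as stated. Relating the initial-cluster expansion of $p_w$ to that of $p_{s_iw}$ means passing through the mutation map, which does not preserve positivity of Laurent coefficients --- if it did, positivity at one cluster would propagate to all clusters and the wild case would be no harder than the tame one. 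So the inductive step has no mechanism, and verifying the base cases $w\in\{e,s_1\}$ gives you nothing to induct from.

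The paper's proof also does not compare the negative greedy term against the sum of the two positive ones monomial by monomial in one shot. It splits off a single Laurent monomial $x_1^{\alpha(k-2)}x_2^{-\alpha(k-1)}$ (the contribution of the compatible pair $(\emptyset,\mathcal{D}_2)$ to the negative term) and writes each $w(1;k)(p)$ as the sum of two separately positive pieces \eqref{eq2} and \eqref{eq1}. The first piece requires knowing that this monomial actually occurs in one of the positive greedy summands; this is where Proposition \ref{prop:support} enters, i.e.\ the fact that the support bound of \cite[Proposition 4.1, Case 6]{llz} is \emph{exact} for imaginary roots --- a nontrivial statement proved via the extremal-pair analysis of Section 3, for which your outline has no substitute. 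The second piece is handled by the explicit injection $\mu$ of Lemma \ref{mu} on compatible pairs, whose well-definedness (compatibility of the image pair) occupies the bulk of Section 4 and rests on the geometric Lemma \ref{lemma:similar path}, asserting that $\mathcal{D}^{\beta(k)\times\beta(k-1)}$ contains a near-copy of $\mathcal{D}^{\alpha(k)\times\alpha(k-1)}$. Without these two ingredients --- the exact support description and the concrete injection --- your proposal is a correct reduction followed by a restatement of the problem.
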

 
 The proof of Lemma~\ref{lem:key-lemma} is carried out in Section~\ref{sec:main-proofs}.
 All the tools in our proof  are already developed in \cite{llz}. 
 The two most important ingredients are as follows:
 \begin{itemize}
 \item  A combinatorial definition of greedy elements in terms of \emph{compatible pairs} (to be recalled later). 
The combinatorics of compatible pairs plays a crucial part in our argument.
\item A precise description of the set of Laurent monomials that appear in the expansion of $x[a_1,a_2]$ in terms of the initial cluster
$\{x_1,x_2\}$, for an arbitrary positive imaginary root $(a_1,a_2)$. 
Specifically, we show that the upper bound for this set given in \cite[Proposition~4.1, Case~6]{llz} is exact.
\end{itemize}
\section{Compatibility and greedy elements in rank 2 cluster algebras}
In this section we recall some definitions and results from \cite{llz}.

Let $(a_1, a_2)$ be a pair of nonnegative integers. 
A \emph{Dyck path} of type $a_1\times a_2$ is a lattice path
from $(0, 0)$  to $(a_1,a_2)$ that 
never goes above the main diagonal joining $(0,0)$ and $(a_1,a_2)$.
Among the Dyck paths of a given type $a_1\times a_2$, there is a (unique) \emph{maximal} one denoted by 
$\mathcal{D} = \mathcal{D}^{a_1\times a_2}$. 
It is defined by the property that any lattice point strictly above $\mathcal{D}$ is also strictly above the main diagonal.

Let $\mathcal{D}=\mathcal{D}^{a_1\times a_2}$.  Let $\mathcal{D}_1=\{u_1,\dots,u_{a_1}\}$ be the set of horizontal edges of $\mathcal{D}$ indexed from left to right, and $\mathcal{D}_2=\{v_1,\dots, v_{a_2}\}$ the set of vertical edges of $\mathcal{D}$ indexed from bottom to top.  
Given any points $A$ and $B$ on $\mathcal{D}$, let $AB$ be the subpath starting from $A$, and going in the Northeast direction until it reaches $B$ (if we reach $(a_1,a_2)$ first, we continue from $(0,0)$). By convention, if $A=B$, then $AA$ is the subpath that starts from $A$, then passes $(a_1,a_2)$ and ends at $A$. If we represent a subpath of $\mathcal{D}$ by its set of edges, then for $A=(i,j)$ and $B=(i',j')$, we have
$$
AB= 
\begin{cases}
\{u_k, v_\ell: i < k \leq i', j < \ell \leq j'\}, \quad\textrm{if $B$ is to the Northeast of $A$};\\
\mathcal{D} - \{u_k, v_\ell: i' < k \leq i, j' < \ell \leq j\}, \quad\textrm{otherwise}.
\end{cases}
$$
We denote by $(AB)_1$ the set of horizontal edges in $AB$, and by $(AB)_2$ the set of vertical edges in $AB$. 
Also let $AB^\circ$ denote the set of lattice points on the subpath $AB$ excluding the endpoints $A$ and $B$ (here $(0,0)$ and $(a_1,a_2)$ are regarded as the same point).

\begin{definition}\cite{llz}
\label{df:compatible}
For $S_1\subseteq \mathcal{D}_1$, $S_2\subseteq \mathcal{D}_2$, we say that the pair $(S_1,S_2)$ is compatible if for every $u\in S_1$ and $v\in S_2$, denoting by $E$ the left endpoint of $u$ and $F$ the upper endpoint of $v$, there exists a lattice point $A\in EF^\circ$ such that 
\begin{equation}
\label{0407df:comp}
|(AF)_1|=b|(AF)_2\cap S_2|\textrm{\; or\; }|(EA)_2|=c|(EA)_1\cap S_1|.\end{equation}
\end{definition}

One of the main results of \cite{llz} is the following combinatorial expression for greedy elements. 
\begin{theorem}\cite{llz}
\label{th:greedy-combinatorial}
For every $(a_1,a_2) \in \ZZ^2_{\geq 0}$, the greedy element $x[a_1,a_2] \in \myAA(b,c)$ at $(a_1,a_2)$ is given by
\begin{equation}
\label{eq:greedy-Dyck-expression}
x[a_1,a_2] = x_1^{-a_1}x_2^{-a_2}\sum_{(S_1,S_2)}x_1^{b|S_2|}x_2^{c|S_1|},
\end{equation}
where the sum is over all compatible pairs $(S_1,S_2)$ in $\mathcal{D}^{a_1\times a_2}$.
\end{theorem}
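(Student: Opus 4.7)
The plan is to match the combinatorial expression on the right-hand side against the defining axioms of the greedy element $x[a_1,a_2]$ from \cite{llz}. Recall that in \cite{llz}, $x[a_1,a_2]$ is characterized as the unique Laurent polynomial of the form $x_1^{-a_1} x_2^{-a_2} \sum_{p,q \geq 0} c(p,q)\, x_1^{bp} x_2^{cq}$ in $\myAA(b,c)$ with $c(0,0) = 1$ whose coefficient array $(c(p,q))$ satisfies an explicit two-sided recursion. Writing $N(p,q)$ for the number of compatible pairs $(S_1, S_2)$ on $\mathcal{D}^{a_1 \times a_2}$ with $|S_2| = p$ and $|S_1| = q$, it then suffices to verify that $N(p,q)$ satisfies the same initial condition and the same recursion.

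The base case $N(0,0) = 1$ is immediate, since $(\emptyset, \emptyset)$ is vacuously compatible. The bulk of the proof is to establish the recursion, which in one of its forms reads
\[
\sum_{k=0}^{p} (-1)^{k} \binom{a_2 - cq + k - 1}{k}\, N(p-k, q) \;=\; 0 \qquad (p \geq 1,\ a_2 \geq cq),
\]
together with a symmetric identity interchanging the roles of the two coordinates. I would prove this by a sign-reversing involution on a set of \emph{decorated configurations} $(S_1, S_2', T)$, where $|S_1| = q$, $(S_1, S_2')$ is compatible with $|S_2'| = p-k$, and $T$ is an auxiliary multiset of size $k$ drawn from a pool of size $a_2 - cq + k - 1$. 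The pool would naturally index the gaps among the ``blocks'' of $c$ consecutive vertical edges that the compatibility condition attaches to each $u \in S_1$. Genuine compatible pairs with $|S_2| = p$ would appear as the fixed points with $T = \emptyset$, and every other decorated configuration must cancel against a partner.

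The main obstacle is to design this involution precisely. The compatibility condition (Definition \ref{df:compatible}) is global: for every pair $(u,v) \in S_1 \times S_2$ it demands the existence of a lattice point on the arc $EF$ at which one of the local counts $b|(AF)_2 \cap S_2|$ or $c|(EA)_1 \cap S_1|$ balances the corresponding side length. A plausible candidate for the involution is to scan $\mathcal{D}$ from bottom to top and locate the first vertical edge at which the compatibility budget for $T$ saturates, then toggle that edge's status between $T$ and $S_2'$; the subtle part is to check that this move is well-defined (independent of earlier choices on the path), involutive, and sign-reversing, while correctly accounting for the multiset nature of $T$ encoded by $\binom{a_2 - cq + k - 1}{k}$. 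This bookkeeping is the essential content of the proof. Once both recursions are verified, the uniqueness characterization of greedy elements identifies $\sum_{(S_1, S_2)} x_1^{b|S_2|} x_2^{c|S_1|}$ with $x_1^{a_1} x_2^{a_2}\, x[a_1, a_2]$, completing the argument.
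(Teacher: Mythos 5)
Note first that this paper does not prove Theorem~\ref{th:greedy-combinatorial}: it is quoted from \cite{llz}, and the sentence immediately after it says that for the purposes of this paper the formula \eqref{eq:greedy-Dyck-expression} may be taken as the \emph{definition} of the greedy elements. So there is no in-paper proof to compare against; the benchmark is the proof in \cite{llz} itself, which is a substantial, multi-step combinatorial argument.

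Measured against that, you have correctly identified the natural strategy --- show that the counts $N(p,q)$ of compatible pairs with $|S_2|=p$, $|S_1|=q$ satisfy the base case $N(0,0)=1$ and the two-sided recurrence that uniquely characterizes the greedy coefficients $c(p,q)$ --- but the proposal stops exactly where the real work begins. The sign-reversing involution on ``decorated configurations'' is never constructed: you describe a candidate scanning rule and then acknowledge yourself that verifying it is well-defined, involutive, and sign-reversing ``is the essential content of the proof.'' That verification is the entire difficulty. The obstruction is the one you name: compatibility (Definition~\ref{df:compatible}) is a global condition on every pair $(u,v)\in S_1\times S_2$ simultaneously, so toggling a single vertical edge between $S_2'$ and $T$ can create or destroy witnesses $A$ for arbitrarily distant pairs, and there is no a priori reason the ``first saturation point'' is stable under the toggle. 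Until that is controlled, the argument is a plan rather than a proof. Two smaller issues: the regime in which each branch of the recurrence applies is selected in \cite{llz} by comparing $ca_1q$ with $ba_2p$, not by the condition $a_2\ge cq$ that you attach to your identity, and the identity need not hold outside the correct regime; and the consistency of the two branches on their overlap (which is part of why the greedy element is well defined) is used implicitly and should be cited. In short: right skeleton, but the key lemma --- the combinatorial verification of the recursion --- is missing, and it is precisely the hard part of \cite{llz}.
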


For the purposes of this paper we can view \eqref{eq:greedy-Dyck-expression} as a \emph{definition} of greedy elements.

\section{extremal pairs and their compatibility}
In this section we define extremal pairs and study their compatibility condition. As a consequence, we give a precise description of the set of Laurent monomials that appear in the expansion of $x[a_1,a_2]$ in terms of the initial cluster
$\{x_1,x_2\}$, for an arbitrary positive imaginary root $(a_1,a_2)$. 

\begin{definition}
Let $(a_1,a_2) \in \ZZ^2_{\geq 0}$, and 
$\mathcal{D} = \mathcal{D}^{a_1\times a_2}$. 
Let $s_1$, $s_2$ be two integers such that $0\le s_1\le a_1$, and $0\le s_2\le a_2$. If $S_1=\{u_1,\dots,u_{s_1}\}$ is the set of the first $s_1$ horizontal edges in $\mathcal{D}$, and $S_2=\{v_{a_2-s_2+1},\dots,v_{a_2}\}$ is the set of the last $s_2$ vertical edges in $\mathcal{D}$, then we call $(S_1,S_2)$ the \emph{extremal pair} (in $\mathcal{D}$) of size $(s_1; s_2)$. 
\end{definition}

As in  \cite{llz}, we denote by $c(p,q)$ the coefficients of $x[a_1,a_2]$: $$x[a_1,a_2]=x_1^{-a_1}x_2^{-a_2}\sum_{p,q\ge0}c(p,q)x_1^{bp}x_2^{cq},$$ and define the pointed support of $x[a_1,a_2]$ to be
$$PS[a_1,a_2]=\{(p,q) \in \ZZ_{\geq 0}^2 : c(p,q) \neq 0\} \ .$$
Let $P = P[a_1,a_2]\in\mathbb{R}^2$ be the region bounded by the broken line 
$$(0,0), \,\,(a_2,0), \,\,(a_1/b, a_2/c), \,\, (0,a_1), \,\, (0,0),$$
with the convention that $P$ includes sides $[(0,0), (a_2,0)]$ and $[(0,a_1),(0,0)]$ but excludes the rest of the boundary.
The following lemma gives a simple geometric characterization of positive imaginary roots.
 
\begin{lemma}\label{lemma: concave P} 
A lattice point $(a_1, a_2) \in \ZZ^2_{>0}$ is a positive imaginary root if and only if 
$a_1p+a_2q\le a_1a_2$ for every $(p,q)\in P$, in other words, $P$ is contained in the triangle with vertices $(0,0)$, $(a_2,0)$ and $(0,a_1)$.
\end{lemma}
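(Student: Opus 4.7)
My plan is to reduce the geometric inclusion $P\subseteq T$, where $T$ denotes the triangle with vertices $(0,0),(a_2,0),(0,a_1)$, to the single linear inequality $a_1p+a_2q\leq a_1a_2$ at the distinguished corner $(a_1/b,a_2/c)$ of $P$, and then to recognize that inequality as the defining condition for $(a_1,a_2)$ to be a positive imaginary root. Observing that $T=\{(p,q)\in\RR^2_{\geq 0}:a_1p+a_2q\leq a_1a_2\}$ and that $P\subseteq\RR^2_{\geq 0}$ is clear from the four defining corners of $P$, the containment $P\subseteq T$ is equivalent to the linear functional $L(p,q):=a_1p+a_2q$ satisfying $L\leq a_1a_2$ throughout $P$.

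The next step is to observe that $P$ lies in the convex hull of its four corners $(0,0),(a_2,0),(a_1/b,a_2/c),(0,a_1)$, so the supremum of the linear function $L$ on $P$ is attained at one of these corners. Since $L$ takes the values $0$, $a_1a_2$, $a_1^2/b+a_2^2/c$, and $a_1a_2$ at the four corners in order, the condition reduces to $a_1^2/b+a_2^2/c\leq a_1a_2$. Clearing denominators by $bc$ rewrites this as $ca_1^2-bca_1a_2+ba_2^2\leq 0$, i.e.\ $Q(a_1,a_2)\leq 0$, which by \eqref{eq:imaginary-roots} is precisely the condition for $(a_1,a_2)\in(\ZZ_{>0})^2$ to lie in $\Phi^{\rm im}_+$.

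I do not foresee any real obstacle; the entire content is the identification $L(a_1/b,a_2/c)\leq a_1a_2\iff Q(a_1,a_2)\leq 0$. The one point requiring care is the converse direction: when $Q(a_1,a_2)>0$, the corner $(a_1/b,a_2/c)$ lies strictly outside $T$, and although that specific corner is excluded from $P$ by the stated boundary convention, nearby points in $P$ remain outside $T$ by continuity, so $P\not\subseteq T$. This handles both directions of the ``if and only if.''
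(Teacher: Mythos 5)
Your proof is correct and follows essentially the same route as the paper: both arguments reduce the containment $P\subseteq T$ to checking the linear inequality at the single corner $(a_1/b,a_2/c)$, which after clearing denominators is exactly $Q(a_1,a_2)\le 0$. Your extra care with the convex-hull maximization and with the converse direction (the excluded corner versus nearby points of $P$) only makes explicit what the paper's one-line proof leaves implicit.
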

\begin{proof}
The latter condition is equivalent to the condition that the vertex $(a_1/b, a_2/c)$  lies inside or on the boundary of the triangle, which is equivalent to the inequality $a_1 \cdot a_1/b + a_2 \cdot a_2/c \leq a_1 a_2$. This is in turn equivalent to $Q(a_1,a_2) \leq 0$, i.e., to the condition that  $(a_1,a_2)$ is a positive imaginary root.
\end{proof}


\begin{lemma}\label{lemma: S1 in front of S2}
Suppose $(a_1, a_2)$ is a positive imaginary root, and $(p, q)$ is a pair of positive integers. Let $(S_1, S_2)$ be the extremal pair of size $(q; p)$ in $\mathcal{D}$. Then the following conditions are equivalent:

(1) every $u \in S_1$ precedes every $v \in S_2$;

(2) $a_1 p + a_2 q < a_1 a_2 + a_1 + a_2$. 
\end{lemma}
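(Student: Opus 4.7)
The plan is to reduce condition (1) to a componentwise comparison of lattice points on $\mathcal{D}$ and then extract (2) by a short algebraic manipulation. Because horizontal edges of $\mathcal{D}$ are indexed left to right and vertical edges bottom to top, and both of these orderings coincide with traversal order along the monotone NE path, condition (1) is equivalent to the single statement that $u_q$ precedes $v_j$ on $\mathcal{D}$, where $j := a_2 - p + 1$.

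The heart of the argument is to locate $u_q$ and $v_j$ explicitly using the maximality of $\mathcal{D}$. Writing $u_k$ as the edge from $(k-1,h_k)$ to $(k,h_k)$, the requirement that $(k-1,h_k)$ lie on or below the main diagonal gives $h_k \le a_2(k-1)/a_1$; meanwhile the lattice point $(k-1, h_k + 1)$ sits strictly above $\mathcal{D}$, so by the defining property of the maximal Dyck path it must sit strictly above the diagonal, forcing $h_k + 1 > a_2(k-1)/a_1$. Together these pin down $h_k = \lfloor a_2(k-1)/a_1\rfloor$, and the parallel argument applied to the lower endpoint $(x_j, j-1)$ of $v_j$ yields $x_j = \lceil a_1 j / a_2\rceil$. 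Since $(q, h_q)$ and $(x_j, j-1)$ are both lattice points on the monotone path $\mathcal{D}$, they are automatically componentwise comparable, so $u_q$ precedes $v_j$ if and only if $h_q \le j - 1$ alone. Unpacking this via the floor formula gives $a_2(q-1) < a_1(a_2 - p + 1)$, which rearranges immediately to condition (2).

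I do not anticipate a significant obstacle. Notably, the positive-imaginary-root hypothesis is not literally invoked in the chain of equivalences above; it serves the paper's overall framework, and one checks directly that (2) alone forces $q \le a_1$ and $p \le a_2$, so the extremal pair of size $(q; p)$ is automatically well-defined whenever (2) holds. The only step demanding care is the derivation of the formulas for $h_k$ and $x_j$ from the maximality of $\mathcal{D}$; once these are in hand, everything collapses to a single integer inequality.
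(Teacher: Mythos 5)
Your proof is correct and follows essentially the same route as the paper's: both reduce condition (1) to the single comparison between $u_q$ and the lower endpoint of $v_{a_2-p+1}$, compute the height of $u_q$ as $\lfloor (q-1)a_2/a_1\rfloor$ from the maximality of $\mathcal{D}$, and unfold the floor inequality into (2). Your additional observations (the explicit derivation of the floor/ceiling formulas from maximality, and the fact that the positive-imaginary-root hypothesis is not needed for this particular equivalence) are accurate but do not change the argument.
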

\begin{proof} 
The condition (1) is equivalent to the condition that the edge $u_q$ is not higher than the lower endpoint of $v_{a_2-p+1}$, that is, $\lfloor(q-1)a_2/a_1\rfloor\le a_2-p$. This inequality is equivalent to $(q-1)a_2/a_1-1<a_2-p$, thus is equivalent to (2). 
\end{proof}

Recall that in \cite[Proposition 4.1 (6)]{llz} we showed that $PS[a_1,a_2]$ is contained in $P$.
Our goal in the rest of this section is to prove the following strengthening of this result.

\begin{proposition}
\label{prop:support}
Assume that $(a_1,a_2)$ is a positive imaginary root. If $(p,q)$ is a lattice point in $P$ and both $p$ and $q$ are positive, then the extremal pair $(S_1,S_2)$ of size $(q; p)$ is compatible. 
\end{proposition}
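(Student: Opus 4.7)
My plan is to exhibit, for each pair $(u_i,v_j)\in S_1\times S_2$, an explicit lattice point $A\in EF^\circ$ satisfying one of the two alternatives of~\eqref{0407df:comp}. By Lemma~\ref{lemma: concave P}, the hypothesis $(p,q)\in P$ gives $a_1p+a_2q\le a_1a_2<a_1a_2+a_1+a_2$, so by Lemma~\ref{lemma: S1 in front of S2} every $u\in S_1$ precedes every $v\in S_2$. Writing $E=(i-1,N(i))$ and $F=(\lceil ja_1/a_2\rceil,j)$ with $N(i)=\lfloor(i-1)a_2/a_1\rfloor$, the subpath $EF$ is therefore NE-monotone, and the edge counts for any lattice point $A$ on $EF$ collapse to coordinate differences: $|(EA)_1|=x_A-(i-1)$, $|(EA)_2|=y_A-N(i)$, $|(AF)_1|=x_F-x_A$, $|(AF)_2|=y_F-y_A$.

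I would then produce two candidate witnesses. Let $A_1$ be any lattice point of $\mathcal{D}$ at column $\lceil ja_1/a_2\rceil-b(j+p-a_2)$ with row strictly below $a_2-p$: since $A_1$ precedes $v_{a_2-p+1}$, all of $S_2$ sits in $(A_1F)_2$, and direct computation gives $|(A_1F)_1|=b|(A_1F)_2\cap S_2|=b(j+p-a_2)$, verifying the first alternative of~\eqref{0407df:comp}. Let $A_2$ be any lattice point of $\mathcal{D}$ at row $N(i)+c(q-i+1)$ with column strictly greater than $q$: since $A_2$ follows $u_q$, all of $S_1$ sits in $(EA_2)_1$, and $|(EA_2)_2|=c|(EA_2)_1\cap S_1|=c(q-i+1)$, verifying the second alternative. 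The proof then reduces to showing that for every pair $(u_i,v_j)$ at least one of $A_1,A_2$ actually exists as a lattice point of $\mathcal{D}$ lying strictly between $E$ and $F$.

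The positive-imaginary-root condition $Q(a_1,a_2)\le 0$ gives $a_2/a_1<c$ and $a_1/a_2<b$, which guarantees that whenever the prescribed row of $A_2$ lies in $[0,a_2]$, any lattice point of $\mathcal{D}$ at that row automatically has column exceeding $q$, and symmetrically for $A_1$. A short arithmetic argument then establishes the \emph{dichotomy}: any positive lattice point $(p,q)\in P$ satisfies $bp\le a_1$ or $cq\le a_2$; otherwise the opposite inequalities $bp\ge a_1+1$ and $cq\ge a_2+1$, combined with the ``below line 1'' inequality $ba_2(a_2-p)\ge cq(ba_2-a_1)$ that comes from $(p,q)\in P$ lying below the slanted segment from $(a_2,0)$ to $(a_1/b,a_2/c)$, would force $a_1\ge (b+1)a_2$, contradicting the imaginary-root bound $a_1<ba_2$. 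Hence at least one of $A_1,A_2$ lies on $\mathcal{D}$. Splitting $j\in[a_2-p+1,a_2]$ at the threshold $cq$, the same ``below line 1'' inequality keeps $A_1\in EF^\circ$ for $j\le cq$, while the symmetric ``below line 2'' inequality $ca_1(a_1-q)\ge bp(ca_1-a_2)$ keeps $A_2\in EF^\circ$ for $j>cq$. The hardest part I expect will be the floor/ceiling bookkeeping and the careful transition analysis at $j=cq$ needed to confirm that the prescribed column or row truly hosts a lattice point of $\mathcal{D}$ that is strictly between $E$ and $F$ (and not, in particular, coinciding with $E$); the dichotomy $bp\le a_1\lor cq\le a_2$ together with the two slanted-edge inequalities of $\partial P$ is the structural input that makes the two ranges of validity meet.
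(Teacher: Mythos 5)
Your witness construction itself is sound, and it is in fact equivalent to the mechanism of the paper's proof: the condition that your $A_1$ lie strictly beyond $E$ (i.e.\ have column $\geq i$) is precisely the endpoint inequality $|(EF)_1|>b|(EF)_2\cap S_2|$ of Lemma \ref{extremal pair compatible}, and the condition that your $A_2$ lie strictly before $F$ is precisely $|(EF)_2|>c|(EF)_1\cap S_1|$. The paper passes from these endpoint inequalities to a witness by a discrete intermediate-value argument instead of naming the witness, but the two routes stand or fall together, and the floor/ceiling bookkeeping you defer (that column $x_{A_1}$ really carries a lattice point of row $\leq a_2-p$, and dually for $A_2$) does go through using $ba_2>a_1$ and $ca_1>a_2$.

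The genuine gap is in the final covering step. First, a lattice point $(p,q)\in P$ does \emph{not} in general satisfy both of your slanted-edge inequalities: $P$ is the union of the triangles with vertices $(0,0),(a_2,0),(a_1/b,a_2/c)$ and $(0,0),(a_1/b,a_2/c),(0,a_1)$, and a point of $P$ need only satisfy the one inequality attached to the triangle containing it, i.e.\ only one of \eqref{eq:cond1}, \eqref{eq:cond2}. (Your dichotomy $bp\le a_1$ or $cq\le a_2$ is still true, but for the trivial reason that the first triangle has $cq\leq a_2$ throughout and the second has $bp\leq a_1$ throughout; your derivation of it from ``below line 1'' is not valid.) Second, and more seriously, the split of the pairs $(u_i,v_j)$ at the threshold $j=cq$ is not the correct one. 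Take $b=c=3$, $(a_1,a_2)=(12,12)$, $(p,q)=(2,7)$: this lattice point lies in $P$ (in the second triangle only, so ``below line 1'' fails for it). For the pair $(u_7,v_{12})$ we have $j=12<cq=21$, so your recipe calls for $A_1$; but $x_{A_1}=\lceil 12\cdot12/12\rceil-3(12+2-12)=6<7=i$, so $A_1$ does not lie beyond $E=(6,6)$ and the first alternative of \eqref{0407df:comp} is unavailable --- indeed $|(EF)_1|=6=b|(EF)_2\cap S_2|$, equality rather than the needed strict inequality --- while $A_2$ does work. The true boundary between the pairs served by $A_1$ and those served by $A_2$ is a line of positive slope in the $(i,j)$-plane, not the vertical line $j=cq$; this is exactly what the paper's covering $R\subset H_1\cup H_2$ accomplishes, using the slope comparison $m_1>m_2$ (i.e.\ $Q(a_1,a_2)<0$) together with whichever one of \eqref{eq:cond1}, \eqref{eq:cond2} the point $(p,q)$ actually satisfies. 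Your argument needs that slanted separating line in place of the threshold $j=cq$.
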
 

\begin{corollary}
Assume that $(a_1,a_2)$ is a positive imaginary root. The set $PS[a_1,a_2]$ is the set of all lattice points in the region $P$. 
\end{corollary}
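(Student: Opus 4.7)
The plan is to combine the previously established containment $PS[a_1,a_2] \subseteq P$ from \cite[Proposition~4.1(6)]{llz} with the reverse containment, which follows from Proposition~\ref{prop:support} together with a trivial treatment of the two axis boundaries.

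By Theorem~\ref{th:greedy-combinatorial} the coefficient $c(p,q)$ equals the number of compatible pairs $(S_1,S_2)$ in $\mathcal{D}^{a_1\times a_2}$ with $|S_1|=q$ and $|S_2|=p$, so it suffices, for every lattice point $(p,q) \in P$, to exhibit a single such compatible pair. I would split into three cases.

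\textbf{Interior case $p,q>0$.} Since $(a_1,a_2)$ is a positive imaginary root, Lemma~\ref{lemma: concave P} places $P$ inside the triangle with vertices $(0,0)$, $(a_2,0)$, $(0,a_1)$, and hence $0<p\leq a_2$ and $0<q\leq a_1$. The extremal pair of size $(q;p)$ is therefore well defined in $\mathcal{D}^{a_1\times a_2}$, and Proposition~\ref{prop:support} asserts precisely that it is compatible.

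\textbf{Axis cases.} The remaining lattice points of $P$ lie on the two segments $[(0,0),(a_2,0)]$ and $[(0,a_1),(0,0)]$, both of which are included in $P$. For $(p,0)$ with $0\leq p\leq a_2$, the extremal pair of size $(0;p)$ has $S_1=\emptyset$; for $(0,q)$ with $0\leq q\leq a_1$, the extremal pair of size $(q;0)$ has $S_2=\emptyset$. In either situation Definition~\ref{df:compatible}, which quantifies over pairs $u\in S_1$ and $v\in S_2$, is vacuously satisfied, so the extremal pair is compatible.

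In each case we obtain $c(p,q)\geq 1$, so $P\cap\ZZ^2\subseteq PS[a_1,a_2]$, which combined with the opposite inclusion yields equality. There is no real obstacle: all the substance is packaged in Proposition~\ref{prop:support}, and the deduction amounts to recording the trivial contribution of the extremal pairs sitting on the two coordinate axes, where compatibility is vacuous.
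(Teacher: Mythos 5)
Your proof is correct and follows essentially the same route as the paper's: the containment $PS[a_1,a_2]\subseteq P$ from \cite{llz}, vacuous compatibility of the extremal pair when $p=0$ or $q=0$, and Proposition~\ref{prop:support} for the case $p,q>0$. You merely spell out the two steps the paper labels ``immediate'' (that $c(p,q)$ counts compatible pairs of the given size, and that Lemma~\ref{lemma: concave P} guarantees the extremal pair is well defined), which is fine.
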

\begin{proof}
Because $PS[a_1,a_2]$ is contained in $P$, we only need to show that for every lattice point $(p,q)$ in $P$, the extremal pair $(S_1,S_2)$ of size $(q;p)$ is compatible. If $p=0$ or $q=0$, the conclusion is immediate. Other cases follow from Proposition \ref{prop:support}.
\end{proof}

The following lemma plays a key role in our proof of Proposition~\ref{prop:support}. 

\begin{lemma}\label{extremal pair compatible} Suppose $(a_1,a_2)$ is a positive imaginary root, and $(p,q)$ a pair of positive integers satisfying $a_1p+a_2q < a_1a_2+a_1+a_2$. Let $(S_1,S_2)$ be the extremal pair of size $(q;p)$ in $\mathcal{D}$.
Then $(S_1, S_2)$ is compatible if every horizontal edge $u \in S_1$  with the left endpoint $E$, and every vertical edge $v\in S_2$  with the top endpoint $F$ satisfy at least one of the following inequalities:
\begin{equation}\label{0407df:comp'1}
|(EF)_1|>b|(EF)_2\cap S_2|,
\end{equation}
\begin{equation}\label{0407df:comp'2}
|(EF)_2|>c|(EF)_1\cap S_1|,
\end{equation}
Equivalently,  the extremal pair $(S_1, S_2)$ is compatible if for every $1\le p'\le p$, $1\le q'\le q$, at least one of the following inequalities holds:
\begin{equation}\label{comp'1}
(ba_2-a_1)(p'-1)-a_2(q'-1)>(bp-a_1)a_2,
\end{equation}
\begin{equation}\label{comp'2} 
(ca_1-a_2)(q'-1)-a_1(p'-1)> (cq-a_2)a_1.
\end{equation}
\end{lemma}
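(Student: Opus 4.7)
My plan is to prove the lemma in two steps: a combinatorial discrete intermediate value argument producing a compatibility witness from either endpoint inequality, followed by an algebraic translation between the two formulations of the sufficient condition.

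For the first step, fix a pair $(u,v)$ with $u \in S_1$ of left endpoint $E$ and $v \in S_2$ of top endpoint $F$. Suppose \eqref{0407df:comp'1} holds, and introduce the integer-valued function $f(A) = |(AF)_1| - b|(AF)_2 \cap S_2|$ on the lattice points traversed along the subpath $EF$. By hypothesis $f(E) \geq 1$, while $f(F) = 0$ by direct inspection. As $A$ advances one edge along $EF$, the value of $f$ drops by $1$ on a horizontal step, jumps up by $b$ on a vertical step in $S_2$, and stays fixed on a vertical step outside $S_2$. Taking $A$ to be the first lattice point on the path at which $f(A) \leq 0$, the step into $A$ must be horizontal (no other step-type decreases $f$), forcing $f(A) = 0$, i.e., the equality $|(AF)_1| = b|(AF)_2 \cap S_2|$ required by \eqref{0407df:comp}. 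Interiority $A \in EF^\circ$ follows from $f(E) > 0$ (so $A \neq E$) together with the observation that the last edge of $EF$ is $v$ itself, a vertical $S_2$-edge which would raise $f$ rather than lower it (so $A \neq F$). The parallel argument for \eqref{0407df:comp'2} uses $g(A) = |(EA)_2| - c|(EA)_1 \cap S_1|$, which satisfies $g(E) = 0$, $g(F) \geq 1$, and tracks the first $A$ with $g(A) > 0$; here the boundary exclusion $A \neq E$ relies on the fact that the first edge out of $E$ is the horizontal edge $u \in S_1$, which would strictly decrease $g$.

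For the second step, I would identify the parameter $(p',q')$ with the pair $(u_{q'}, v_{a_2-p'+1})$ — so that $p'$ counts vertical $S_2$-edges from the top and $q'$ counts horizontal $S_1$-edges from the left — and use the explicit description of the maximal Dyck path in which the left endpoint of $u_i$ is $(i-1, \lfloor (i-1)a_2/a_1\rfloor)$ and the top endpoint of $v_j$ is $(\lceil ja_1/a_2\rceil, j)$. This yields the coordinate expressions $|(EF)_1| = \lceil(a_2-p'+1)a_1/a_2\rceil - q'+1$ and $|(EF)_2| = (a_2-p'+1) - \lfloor(q'-1)a_2/a_1\rfloor$. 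The hypothesis $a_1p + a_2q < a_1a_2 + a_1 + a_2$ and Lemma \ref{lemma: S1 in front of S2} together guarantee that every $u \in S_1$ precedes every $v \in S_2$; this forces the clean simplifications $|(EF)_2\cap S_2| = p-p'+1$ and $|(EF)_1\cap S_1| = q-q'+1$. Substituting these into \eqref{0407df:comp'1} and \eqref{0407df:comp'2} and clearing the ceiling/floor via the standard integer equivalences $\lceil y\rceil \geq n \Leftrightarrow y > n-1$ and $\lfloor y\rfloor < n \Leftrightarrow y < n$, routine algebraic rearrangement identifies \eqref{0407df:comp'1} with \eqref{comp'1} and \eqref{0407df:comp'2} with \eqref{comp'2}.

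The chief obstacle I anticipate is not conceptual but organizational: recognizing the indexing convention in the second step (the correspondence requires counting $p'$ from the top of $S_2$ rather than the bottom) so that the disjunctions in the two formulations match pair-by-pair, and tracking carefully the endpoint exclusions in the first step so that the witness $A$ is genuinely interior to the subpath. Once these points are settled, both steps reduce to mechanical verification.
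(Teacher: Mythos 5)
Your proposal is correct and follows essentially the same route as the paper's proof: a discrete intermediate-value argument along the subpath $EF$ (the paper uses $f(AF)=b|(AF)_2\cap S_2|-|(AF)_1|$, the negative of your function, and the same ``only unit steps in the decreasing direction'' observation to land exactly on zero), followed by the identical indexing $u=u_{q'}$, $v=v_{a_2-p'+1}$ and the same floor/ceiling manipulation to convert \eqref{0407df:comp'1} into \eqref{comp'1}, with Lemma~\ref{lemma: S1 in front of S2} justifying $|(EF)_2\cap S_2|=p-p'+1$ and $|(EF)_1\cap S_1|=q-q'+1$ exactly as you indicate. The only cosmetic slip is in your $g$-case, where the compatibility witness is the predecessor of the first point with $g>0$ (where $g=0$), not that point itself; your own boundary-exclusion reasoning already locates it correctly, so nothing of substance changes.
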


\begin{proof}
Define $f(AB)=b|(AB)_2\cap S_2|-|(AB)_1|$. Without loss of generality, we assume \eqref{0407df:comp'1}, that is, $f(EF)<0$. As $A$ moves through the lattice points along $\mathcal{D}$ from $E$ to the lower endpoint of $v$, $f(AF)$ either decreases by $b$, stays constant, or increase by $1$ at each step; moreover, it starts with the negative value $f(EF)$ and end at the positive value $b$. Thus there exists $A\in EF^\circ$ such that $f(AF)=0$. Since the same argument works for every $u$ and $v$, we conclude that $(S_1,S_2)$ is compatible.

Next, we show that, for $u=u_{q'}$ and $v=v_{a_2-p'+1}$, \eqref{0407df:comp'1} is equivalent to \eqref{comp'1}. Indeed,
$$\aligned
\eqref{0407df:comp'1}&\Leftrightarrow (a_1-\lfloor a_1(p'-1)/a_2\rfloor)-(q'-1)> b(p-p'+1)\\
&\Leftrightarrow \lfloor a_1(p'-1)/a_2\rfloor\le a_1-q'-b(p-p'+1)\\
&\Leftrightarrow a_1(p'-1)/a_2-1< a_1-q'-b(p-p'+1)\Leftrightarrow\eqref{comp'1}.
\endaligned$$

The equivalence of \eqref{0407df:comp'2} and \eqref{comp'2} is proved similarly.
\end{proof}

\begin{proof}[Proof of Proposition \ref{prop:support}] 
It is then easy to check that $(p,q)$ lies in $P$ if and only if at least one of the following two conditions hold:
\begin{equation}\label{eq:cond1} 
a_1\ge bp \textrm{ and }(ca_1-a_2)(a_1-bp)> (cq-a_2)a_1,
\end{equation}
\begin{equation}\label{eq:cond2}
a_2\ge cq \textrm{ and }(ba_2-a_1)(a_2-cq)> (bp-a_1)a_2.
\end{equation}
Without loss of generality we assume \eqref{eq:cond1} holds.

Thanks to Lemma \ref{extremal pair compatible}, to show the compatibility of $(S_1,S_2)$, it suffices to prove
$$R\subset H_1\cup H_2,$$ where
$$\aligned
R&=\{(p',q')\in\mathbb{Z}^2|1\le p'\le p, 1\le q'\le q\},\\
H_1&=\{(p',q')\in \mathbb{R}^2|(ba_2-a_1)(p'-1)-a_2(q'-1)> (bp-a_1)a_2\},\\
H_2&=\{(p',q')\in \mathbb{R}^2|(ca_1-a_2)(q'-1)-a_1(p'-1)> (cq-a_2)a_1\}.
\endaligned
$$
Note that $H_1$ is the half plane below the line passing through $(1,a_1-bp+1)$ with slope $m_1=(ba_2-a_1)/a_2>0$, and $H_2$ is a half plane above a line 
with slope $m_2=a_1/(ca_1-a_2)>0$. Also note that $m_1> m_2$ because $Q(a_1,a_2)<0$. Thus to show that $R\subset H_1\cup H_2$, it suffices to show that $(1,a_1-bp+1)\in H_2$ (as illustrated in Figure \ref{H1H2}). But this is exactly the statement of \eqref{eq:cond1}.
\begin{figure}[h]
\begin{tikzpicture}[scale=.6]
\fill[red!50, fill opacity=0.5] (-.7,-1)--(3.2,8)--(8,8)--(8,-1); 
\fill[blue!50, fill opacity=0.5] (-1,1.5)--(8,5.5)--(8,8)--(-1,8); 
\draw[step=1,color=gray] (-1,-1) grid (8,8);
\draw[line width=1,color=black] (-1,0)--(8,0);
\draw[line width=1,color=black] (0,-1)--(0,8);
\foreach \x in {1,2,...,6}{
      \foreach \y in {1,2,...,6}{
        \node[draw,circle,inner sep=1pt,fill] at (\x,\y) {};
}}
\draw[line width=.3,dashed,color=blue] (-.7,-1)--(3.2,8);
\draw[line width=.3,dashed,color=blue] (-1,1.5)--(8,5.5);
\draw (0,-.5) node[anchor=west] {$H_1$};
\draw (0.2,2.7) node[anchor=east] {$H_2$};
\draw (4,3.5) node[anchor=east] {$R$};
\draw (-1.5,4) node[anchor=east] {\small $(1,a_1-bp+1)$};
\draw [->] (-1.5,4) -- (.8,3.1);
\draw (13,0) node[anchor=east] {\phantom{R}};
\end{tikzpicture}
\caption{}
\label{H1H2}
\end{figure}
\end{proof}

\section{Proofs of main resuls}
\label{sec:main-proofs}

In this section we prove Theorems~\ref{th:positivity-tameness} and ~\ref{th:p-explicit}. 
As discussed in Section~\ref{sec:intro}, it is enough to prove Lemma~\ref{lem:key-lemma}, which will be our goal.

Due to obvious symmetry, we can and will assume in the rest of this Section that 
\begin{equation}
\label{eq:symmetry}
\min(b,c) = c.
\end{equation}
In particular, we can disregard the last case in \eqref {eq:p-explicit}.
Now note that since  both $s_1$ and $s_2$ are involutions (see \eqref{eq:s1-s2}), each element of $W$
is one of the following (for $k\geq 0$):
$$w(1;k) = \underbrace{s_{\rem{k}}\cdots s_1 s_2 s_1  }_{k
\text{~factors}}, \,\,
w(2;k) = \underbrace{s_{\rem{k+1}}\cdots  s_2 s_1 s_2 }_{k
\text{~factors}} \ ;$$
here we use the convention $w(1;0) = w(2;0) = e$, the identity
element of $W$, and $\rem{k}=1$ if $k$ is odd, or $2$ if $k$ is even.

Let $$r_k = \left\{ \begin{array}{ll} b & \text{ for } k\text{ odd, } \\  c & \text{ for } k\text{ even.}    \end{array}     \right.$$

\begin{definition}\label{definition:alpha}
For $(a_1,a_2)\in\mathbb{Z}^2$, we define a sequence $(a(k))_{k\ge-1}$ with initial data $(a_1,a_2)$ as follows:
let $a(-1)=a_2$, $a(0) = a_1$,  and for $k>0$ recursively define 
\begin{equation}\label{a}
a(k)= r_{k-1} a(k-1) - a(k-2).
\end{equation}

We define sequences $(\alpha(k)),(\beta(k)),(\gamma(k))$ with initial data given in the table: 
\begin{center}
\begin{tabular}{|  l | c  |c | c|}
  \hline                  
 & $(\alpha(0),\alpha(-1))$&$(\beta(0),\beta(-1))$&$(\gamma(0),\gamma(-1))$\\
 \hline
  if $\min(b,c)>1$ & $(1,1)$ &$((c-1)b+1,c+1)$ &$(b+1,(b-1)c+1)$\\
\hline 
  if $c=1$ & $(2,1)$ & $(b+2,3)$ & $(b+2,b-1)$\\
 \hline  
\end{tabular}
\end{center}
\end{definition}

The next step is to show that all components of the element $p$ in \eqref{eq:p-explicit}
correspond to positive imaginary roots. 
Thus we show the following.

\begin{lemma}
\label{lem:imaginary-components}
Suppose we are in the wild case, that is, $bc > 4$. 
\begin{enumerate}
\item If $\min(b,c) =  c > 1$ then each of the vectors $(1,1)$, $(b+1,bc-c+1)$, and $(bc-b+1,c+1)$ is a positive imaginary root (recall that this means that the form $Q$ given by \eqref{eq:scalar-product} takes negative values at all these vectors). 
\item If $c=1$ and $b > 4$, then each of the vectors $(2,1)$, $(b+2,3)$, and $(b+2,b-1)$ is a positive imaginary root.
\end{enumerate} 
\end{lemma}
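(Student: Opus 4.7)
The plan is to verify directly that the quadratic form $Q(a_1,a_2) = c a_1^2 - bc a_1 a_2 + b a_2^2$ takes strictly negative values at each of the listed vectors. Throughout, I would exploit two symmetries to shorten the work: first, the identity $Q_{b,c}(a_1,a_2) = Q_{c,b}(a_2,a_1)$, which pairs $(bc-b+1,c+1)$ with $(b+1,bc-c+1)$ under the swap $b\leftrightarrow c$; second, the $W(A)$-invariance of $Q$, which identifies vectors lying in a common Weyl-group orbit.

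For case (1), where $c=\min(b,c)\ge 2$ and $bc>4$, I first compute $Q(1,1) = b+c-bc = 1-(b-1)(c-1)$; in the wild regime one has either $(b,c)=(b,2)$ with $b\ge 3$, giving $(b-1)(c-1) = b-1 \ge 2$, or $b,c\ge 3$, giving $(b-1)(c-1)\ge 4$, so in either subcase $Q(1,1) \le -1$. Next I would expand $Q(b+1,\,bc-c+1)$; the calculation should collapse to the clean factorization
\[
Q(b+1,\,bc-c+1) = -(bc-b-c)(2bc+1),
\]
which is negative precisely when $(b-1)(c-1)>1$, i.e.\ under the same hypotheses as above. The remaining vector $(bc-b+1,\,c+1)$ is obtained from $(b+1,\,bc-c+1)$ by the swap $b\leftrightarrow c$ applied coordinate-wise, so the first symmetry noted above yields the same value of $Q$ and the same sign verdict.

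For case (2), where $c=1$ and $b\ge 5$, a direct substitution gives $Q(2,1) = 4-b \le -1$. For $(b+2,3)$ a short expansion should yield $Q(b+2,3) = -2b^2+7b+4$; the positive root of $2b^2-7b-4$ is $b=4$, so this is strictly negative for every integer $b\ge 5$. Finally, since $c=1$, the simple reflection $s_1\colon (a_1,a_2)\mapsto (a_1,\,a_1-a_2)$ sends $(b+2,3)$ to $(b+2,b-1)$, so by $W(A)$-invariance $Q(b+2,b-1) = Q(b+2,3)$, and the third vector requires no new computation. There is no real obstacle in this lemma; it is a bookkeeping verification, and the only points worth paying attention to are the factorization of $Q(b+1,\,bc-c+1)$ and the observation that $(b+2,3)$ and $(b+2,b-1)$ lie in the same $s_1$-orbit when $c=1$.
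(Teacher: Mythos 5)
Your proposal is correct and follows essentially the same route as the paper: a direct verification that $Q$ is negative at each listed vector, with the same factorizations $Q(b+1,bc-c+1)=(2bc+1)(b+c-bc)$ and $Q(b+2,3)=(2b+1)(4-b)$ that the paper records. The only cosmetic differences are that you deduce the third vector in each case from a symmetry ($b\leftrightarrow c$, resp.\ $W(A)$-invariance under $s_1$) instead of recomputing, and you certify $b+c-bc<0$ via $1-(b-1)(c-1)$ rather than the paper's decomposition $(2-c)c+(b-c)(1-c)$; both are fine.
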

\begin{proof}
(1) A direct computation shows $Q(1,1)=b+c-bc$, $Q(b+1,bc-c+1)=Q(bc-b+1,c+1)=(2bc+1)(b+c-bc)$. Note that $b+c-bc = (2-c)c + (b-c)(1-c)$ is a sum of two non-positive terms. Furthermore, we see that the equality $b+c-bc = 0$ is achieved only when $b=c=2$, which is not a wild case. So in the wild case we have a strict equality $b + c - bc < 0$, and therefore $Q(1,1)<0$, $Q(b+1,bc-c+1)<0$.

(2) A direct computation shows $Q(2,1)=4-b<0$, $Q(b+2,3)=Q(b+2,b-1)=(2b+1)(4-b)<0$.
\end{proof}

\begin{proposition}\label{prop:w(1;r)} Using notation in Definition \ref{definition:alpha}, for any $(a_1,a_2) \in \ZZ^2$, and any $k \geq 0$, we have:
$$w(1;k) (a_1, a_2) =\left\{ \begin{array}{ll} 
(a(k-1), a(k)), & \text{ if $k$ is odd};\\   
(a(k), a(k-1)), & \text{ if $k$ is even}.\\
\end{array}     \right.
$$
As a consequence, for $p$ defined in \eqref{eq:p-explicit},
$$
w(1;k) (p) =\left\{ \begin{array}{ll} 
x[\beta(k-1), \beta(k)]+x[\gamma(k-1), \gamma(k)]-x[\alpha(k-1), \alpha(k)], & \text{ if $k$ is odd};\\   
x[\beta(k), \beta(k-1)]+x[\gamma(k), \gamma(k-1)]-x[\alpha(k), \alpha(k-1)], & \text{ if $k$ is even}.\\
\end{array}     \right. \\
$$
\end{proposition}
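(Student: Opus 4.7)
The plan is a straightforward induction on $k \geq 0$ for the first identity, followed by an application of Proposition \ref{pr:sigma-w} (invoking Lemma \ref{lem:imaginary-components} for eligibility) to extract the second identity. Nothing deeper than careful parity bookkeeping is required.

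For the first identity, I would induct on $k$. The base case $k=0$ gives $w(1;0) = e$, and the right-hand side for $k$ even reads $(a(0), a(-1)) = (a_1, a_2)$, matching. For the inductive step, use the factorization $w(1;k) = s_{\rem{k}} \cdot w(1;k-1)$. If $k$ is odd, then $k-1$ is even so by the inductive hypothesis $w(1;k-1)(a_1, a_2) = (a(k-1), a(k-2))$; applying $s_1$ (since $\rem{k} = 1$) produces the pair $(a(k-1),\, c \cdot a(k-1) - a(k-2))$, and because $r_{k-1} = c$ (as $k-1$ is even), the second coordinate equals $a(k)$ by the recursion \eqref{a}. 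The case $k$ even is symmetric: one applies $s_2$ to $(a(k-2), a(k-1))$ and uses $r_{k-1} = b$ to recognize $b \cdot a(k-1) - a(k-2)$ as $a(k)$.

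For the consequence, observe that the three sequences $(\alpha(k)), (\beta(k)), (\gamma(k))$ are instances of the sequence $(a(k))$ for three different choices of initial data, each of which is a positive imaginary root by Lemma \ref{lem:imaginary-components}, both in the case $\min(b,c) > 1$ and in the case $c = 1$. Proposition \ref{pr:sigma-w} therefore applies to each: writing $\sigma \in W$ for the element corresponding to $w(1;k) \in W(A)$, we have
\[
\sigma\bigl(x[\delta(0), \delta(-1)]\bigr) = x\bigl[w(1;k)(\delta(0), \delta(-1))\bigr]
\]
for $\delta \in \{\alpha, \beta, \gamma\}$. Substituting the formula from the first part into each of the three summands of $p$ (the minus sign on the $\alpha$-term is preserved since $\sigma$ acts linearly on $\myAA(b,c)$) yields the claimed expressions for $w(1;k)(p)$ in each parity.

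The only obstacle is essentially notational: aligning the indexing convention of the $a(k)$ sequence (with $a(-1)$ being the \emph{second} coordinate of the initial pair) with the matrix action of $s_1, s_2$ on $(a_1, a_2)$, and correctly tracking which entry is which as $k$ alternates in parity.
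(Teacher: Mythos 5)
Your proposal is correct and matches the paper's approach: the paper's entire proof is ``Straightforward induction on $k$,'' and your induction (base case $k=0$, peeling off $s_{\rem{k}}$ and matching $r_{k-1}$ to the parity of $k-1$) together with the application of Proposition \ref{pr:sigma-w} via Lemma \ref{lem:imaginary-components} is exactly the intended argument.
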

\begin{proof}
Straightforward induction on $k$.
\end{proof}

In order to treat $w(1;k)(p)$ uniformly, we denote by $\myx[a(k),a(k-1)]$ the greedy element in $\mathcal{A}(r_{k-1},r_k)$ pointed at $(a(k),a(k-1))$ for every $k$. For convenience, we extend the sequences $(\alpha(k))$, $(\beta(k))$, $(\gamma(k))$ to all $k<-1$ using the relation  \eqref{a}.

\begin{proof}[Proof of Lemma \ref{lem:key-lemma}]
We only show that $w(1;r)(p)$ is positive at $\{x_1,x_2\}$ (recall that $p$ is defined in \eqref{eq:p-explicit}), since the treatment of $w(2;r)(p)$ is completely similar and will be left to the reader.

Thanks to Proposition \ref{prop:w(1;r)}, it suffices to prove that the Laurent polynomial
$$p_k=\myx[\beta(k), \beta(k-1)]+\myx[\gamma(k), \gamma(k-1)]-\myx[\alpha(k), \alpha(k-1)]\in\mathcal{A}(r_{k-1},r_k)$$ 
is positive at $\{x_1,x_2\}$ for every $k\ge 0$. Since $p_k$ is the sum of the following two Laurent polynomials
\begin{equation}\label{eq2}
\myx[\gamma(k),\gamma(k-1)]-x_1^{\alpha(k-2)}x_2^{-\alpha(k-1)},
\end{equation}
\begin{equation}\label{eq1}
\myx[\beta(k),\beta(k-1)]+x_1^{\alpha(k-2)}x_2^{-\alpha(k-1)}-\myx[\alpha(k),\alpha(k-1)],
\end{equation}
it suffices to show that \eqref{eq2} and \eqref{eq1} are positive at $\{x_1,x_2\}$. They are proved separately in Lemma \ref{lemma:eq2} and \ref{lemma:eq1}.
\end{proof}

\begin{lemma}\label{lemma:eq2} For every $k\ge0$, \eqref{eq2} is positive at $\{x_1,x_2\}$.
\end{lemma}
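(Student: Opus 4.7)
My plan is to exhibit the Laurent monomial $x_1^{\alpha(k-2)} x_2^{-\alpha(k-1)}$ as a single term appearing in the greedy expansion of $\myx[\gamma(k), \gamma(k-1)]$ with coefficient at least $1$. Because the greedy formula of Theorem~\ref{th:greedy-combinatorial} presents every greedy element as a sum of Laurent monomials with non-negative integer coefficients, subtracting one copy of such a term leaves a Laurent polynomial whose coefficients remain non-negative, which is exactly positivity at $\{x_1, x_2\}$.

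To realize the monomial, I use the correspondence furnished by the greedy formula in $\myAA(r_{k-1}, r_k)$: a term $x_1^A x_2^B$ of $\myx[a_1, a_2]$ corresponds to a compatible pair of sizes $(q; p)$ with $A = r_{k-1}p - a_1$ and $B = r_k q - a_2$. Taking $(A, B) = (\alpha(k-2), -\alpha(k-1))$ and $(a_1, a_2) = (\gamma(k), \gamma(k-1))$ yields the candidate
\[
p_k = \frac{\gamma(k) + \alpha(k-2)}{r_{k-1}}, \qquad q_k = \frac{\gamma(k-1) - \alpha(k-1)}{r_k}.
\]
Proposition~\ref{prop:support} will then guarantee $c(p_k, q_k) \geq 1$ once I verify, by induction on $k \geq 0$, the following three claims: (i)~$p_k$ and $q_k$ are positive integers; (ii)~$(\gamma(k), \gamma(k-1))$ is a positive imaginary root of $\myAA(r_{k-1}, r_k)$; (iii)~$(p_k, q_k)$ lies in $P[\gamma(k), \gamma(k-1)]$.

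Claims (i) and (ii) are relatively mild. Divisibility and positivity propagate from the base cases $k = 0, 1$, which must be checked separately in the two initial-data regimes of Definition~\ref{definition:alpha}, via the shared recurrence $a(k) = r_{k-1} a(k-1) - a(k-2)$ satisfied by $\alpha$, $\beta$, $\gamma$ together with the two-step periodicity $r_{k+1} = r_{k-1}$; the imaginary-root property follows from Lemma~\ref{lem:imaginary-components} at $k = 0$ and the fact that the recursion encodes the alternating action of the simple reflections $s_1, s_2$, which preserve the positive imaginary cone of the $W(A)$-invariant quadratic form $Q$ (cf.\ Proposition~\ref{pr:sigma-w}).

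The main obstacle is (iii), the geometric containment $(p_k, q_k) \in P[\gamma(k), \gamma(k-1)]$. I plan to verify one of the two alternatives \eqref{eq:cond1} or \eqref{eq:cond2} from the proof of Proposition~\ref{prop:support}, the choice alternating with the parity of $k$, and to reduce each to its analogue one step earlier using the common recurrence, closing the induction. Once (i)--(iii) are in place, Proposition~\ref{prop:support} identifies the extremal pair of size $(q_k; p_k)$ as compatible and thereby contributes at least $1$ to the coefficient of $x_1^{\alpha(k-2)} x_2^{-\alpha(k-1)}$ in $\myx[\gamma(k), \gamma(k-1)]$, completing the proof of Lemma~\ref{lemma:eq2}.
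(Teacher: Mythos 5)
Your overall strategy coincides with the paper's: both proofs reduce the positivity of \eqref{eq2} to showing that the lattice point $(p_k,q_k)$ with $p_k=(\gamma(k)+\alpha(k-2))/r_{k-1}$ and $q_k=(\gamma(k-1)-\alpha(k-1))/r_k$ lies in the region $P[\gamma(k),\gamma(k-1)]$, and then invoke Proposition~\ref{prop:support} to produce a compatible extremal pair contributing the monomial $x_1^{\alpha(k-2)}x_2^{-\alpha(k-1)}$ to $\myx[\gamma(k),\gamma(k-1)]$. Your claims (i) and (ii) are fine as sketched. The problem is that claim (iii), which carries essentially all of the content of the lemma, is only announced (``I plan to verify\dots'') and never carried out; as written, the proposal contains no argument for the one nontrivial assertion.

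Moreover, the plan you announce for (iii) starts from a wrong premise. There is no parity alternation between \eqref{eq:cond1} and \eqref{eq:cond2}: by \eqref{4.6:eq3} one has $r_{k-1}p_k=\gamma(k)+\alpha(k-2)>\gamma(k)$, so the first clause of \eqref{eq:cond1} fails for \emph{every} $k$, and it is always \eqref{eq:cond2} that must be verified. The paper sidesteps any induction by using the closed forms from Lemma~\ref{relation A B}: identities \eqref{4.6:eq3} and \eqref{4.6:eq4} give $p_k=2\alpha(k-1)$ and $q_k=\alpha(k-2)$ (so integrality and positivity are immediate), the inequalities $p_k>\gamma(k)/r_{k-1}$ and $q_k>0$ place the point in the part of $P$ where only the edge from $(\gamma(k-1),0)$ to $(\gamma(k)/r_{k-1},\gamma(k-1)/r_k)$ is binding, and the remaining strict inequality collapses to the single determinant evaluation $\gamma(k-2)\alpha(k-1)-\gamma(k-1)\alpha(k-2)=\delta(b,c)>0$ supplied by \eqref{4.6:eq5}. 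If you replace your inductive plan for (iii) by this computation, your argument becomes exactly the paper's proof.
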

\begin{lemma}\label{lemma:eq1} For every $k\ge0$, \eqref{eq1} is positive at $\{x_1,x_2\}$.
\end{lemma}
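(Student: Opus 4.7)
Plan for Lemma \ref{lemma:eq1}:

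My plan is to prove the non-negativity of every Laurent coefficient of \eqref{eq1} in $\{x_1,x_2\}$ by a monomial-by-monomial comparison between $\myx[\beta(k),\beta(k-1)]$ and $\myx[\alpha(k),\alpha(k-1)]$, using the combinatorial formula of Theorem \ref{th:greedy-combinatorial}. First, I would verify by induction on $k$ (with base cases from Definition \ref{definition:alpha} and recursion \eqref{a}) the two key identities
$$\beta(k)-\alpha(k)=r_{k-1}\alpha(k+1),\qquad \beta(k-1)-\alpha(k-1)=r_k\alpha(k).$$
These say that a monomial $x_1^A x_2^B$ appearing in $\myx[\alpha(k),\alpha(k-1)]$ at the pointed-support index $(p,q)$ also appears in $\myx[\beta(k),\beta(k-1)]$ at the shifted index $(p+\alpha(k+1),q+\alpha(k))$. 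In particular, the added monomial $x_1^{\alpha(k-2)}x_2^{-\alpha(k-1)}$ corresponds to $(p,q)=(\alpha(k-1),0)$ in the $\alpha$-expansion, whose unique compatible pair $(\emptyset,\mathcal{D}_2)$ in $\mathcal{D}^{\alpha(k)\times\alpha(k-1)}$ contributes coefficient exactly $1$; the added term therefore precisely cancels this contribution of $-\myx[\alpha(k),\alpha(k-1)]$.

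After this reduction, the lemma amounts to establishing the coefficient inequality
$$c_\beta(p+\alpha(k+1),q+\alpha(k))\geq c_\alpha(p,q)$$
for every $(p,q)\in PS[\alpha(k),\alpha(k-1)]\setminus\{(\alpha(k-1),0)\}$, where $c_\alpha(p,q)$ and $c_\beta(p',q')$ count compatible pairs of the corresponding sizes in $\mathcal{D}^{\alpha(k)\times\alpha(k-1)}$ and $\mathcal{D}^{\beta(k)\times\beta(k-1)}$ respectively. The natural strategy is to construct an explicit injection $(S_1,S_2)\mapsto (S'_1,S'_2)$ from the former set to the latter, obtained by extending $(S_1,S_2)$ with a canonical block of $\alpha(k)$ horizontal edges appended to $S_1$ and $\alpha(k+1)$ vertical edges appended to $S_2$, located in fixed portions of $\mathcal{D}^{\beta(k)\times\beta(k-1)}$ (for instance, the leading horizontal edges and the trailing vertical edges) together with an appropriate embedding of the original edges of $(S_1,S_2)$ dictated by the structure of the extremal pair of Proposition \ref{prop:support}.

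The main obstacle is verifying that the augmented pair $(S'_1,S'_2)$ remains compatible. For this I would invoke Lemma \ref{extremal pair compatible} and its reformulations \eqref{comp'1}--\eqref{comp'2}, which reduce compatibility to a system of strict linear inequalities parameterized by the positions of edges in $S'_1,S'_2$. The decisive input is the imaginary-root condition $Q(\alpha(k),\alpha(k-1))<0$ in $\mathcal{A}(r_{k-1},r_k)$, which holds because $(\alpha(k),\alpha(k-1))$ lies in the $W$-orbit of $(\alpha(0),\alpha(-1))$ and $\Phi^{\rm im}_+$ is $W$-invariant (cf.\ Lemma \ref{lem:imaginary-components}); the strictness of this quadratic inequality provides exactly the slack needed to verify Definition \ref{df:compatible} at every horizontal-vertical edge pair in $(S'_1,S'_2)$. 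The index $(p,q)=(\alpha(k-1),0)$ is exactly the single degenerate case in which the injection would fail, and it is handled by the cancellation arranged in the first paragraph. The treatment of $w(2;k)(p)$ is entirely symmetric and follows the same pattern with the roles of $b$ and $c$ exchanged.
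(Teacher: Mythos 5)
Your overall architecture coincides with the paper's: the paper also observes that the subtracted monomial $x_1^{\alpha(k-2)}x_2^{-\alpha(k-1)}$ is exactly the contribution of the unique compatible pair $(\emptyset,\mathcal{D}^{\alpha(k)\times\alpha(k-1)}_2)$, and then proves the coefficientwise inequality by an explicit injection $\mu$ (Lemma \ref{mu}) that prepends the first $\alpha(k)$ horizontal edges, appends the last $\beta(k-1)-2\alpha(k-1)$ vertical edges, and shifts the original pair into the middle of $\mathcal{D}^{\beta(k)\times\beta(k-1)}$. Your shift identities are the paper's \eqref{4.6:eq2}, and the exceptional index you isolate is the correct one.

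The gap is in the step you describe as verifying that the augmented pair $(S'_1,S'_2)$ remains compatible. You propose to invoke Lemma \ref{extremal pair compatible} and the inequalities \eqref{comp'1}--\eqref{comp'2}, but that lemma is a sufficient criterion for \emph{extremal} pairs only, i.e.\ pairs in which $S_1$ consists of an initial run of horizontal edges and $S_2$ of a terminal run of vertical edges. The image pairs $(S'_1,S'_2)$ of the injection are not extremal in general: $S'_1$ is an initial block of $\alpha(k)$ horizontal edges followed by a shifted copy of an \emph{arbitrary} compatible $S_1$, and similarly for $S'_2$. So the linear-inequality criterion does not apply, and Definition \ref{df:compatible} must be checked pair by pair, producing a witness point $A$ for each $(u',v')$ with $u'\in S'_1$, $v'\in S'_2$. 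This is where the real work lies, and the imaginary-root inequality $Q(\alpha(k),\alpha(k-1))<0$ alone does not supply it. The paper needs two further ingredients that your plan omits: (i) Lemma \ref{lemma:similar path}, which shows (via a lattice-point descent argument) that the subpath of $\mathcal{D}^{\beta(k)\times\beta(k-1)}$ between $B'$ and $C'$ has the same shape as $\mathcal{D}^{\alpha(k)\times\alpha(k-1)}$, so that witnesses for edge pairs lying in that window can be transported from the compatibility of $(S_1,S_2)$; and (ii) the support comparison Lemmas \ref{support comparison preparation} and \ref{support comparison} (whose proof uses Pick's theorem on the triangle $KLK'$), which handle, via a reduction of general pairs to extremal ones and a four-case contradiction argument, the edge pairs $(u',v')$ where at least one of $u',v'$ lies outside that window. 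Without an argument of this kind your injection is not shown to land in the set of compatible pairs, so the proposed proof is incomplete at its decisive point.
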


In order to prove the above two lemmas, we claim some identities among sequences $(\alpha(k)),(\beta(k))$ and $(\gamma(k))$.
\begin{lemma}\label{relation A B}
For every nonnegative integer $k$, we have $\alpha(k)<\beta(k)$. 
For every integer $k$, 
\begin{align}
\label{4.6:eq1} 2\alpha(k-2) + \alpha(k)=\beta(k-2), \\
\label{4.6:eq2}\alpha(k-1) + r_{k} \alpha(k) =\beta(k-1),\\
\label{4.6:eq3}2r_{k}\alpha(k) - \alpha(k-1)=\gamma(k+1),\\
\label{4.6:eq4}\alpha(k) + r_k \alpha(k-1) =\gamma(k),
\end{align}

and
\begin{equation}\label{4.6:eq5}
\aligned
&\alpha(k-1)\beta({k})-\alpha({k})\beta({k-1}) =\alpha(k)\gamma(k-1)-\alpha(k-1)\gamma(k) \\
&\quad\quad\quad=r_{k-1}\alpha(k-1)\alpha(k+1)-r_{k}\alpha(k)^2\\
&\quad\quad\quad =bc\alpha(k-1)\alpha(k)-r_{k-1}\alpha(k-1)^2-r_{k}\alpha(k)^2 = \delta(b,c)>0
\endaligned
\end{equation}
where 
$$\delta(b,c)= \left\{ \begin{array}{ll}bc-b-c & \text{ if } \min(b,c)>1; \\  
 b-4& \text{ if } c=1.    \end{array}     \right.$$
\end{lemma}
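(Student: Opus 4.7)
All assertions reduce to algebraic manipulations with solutions of the common recurrence
\[
a(k) = r_{k-1}\,a(k-1) - a(k-2)
\]
on $\mathbb{Z}$. By construction each of $\alpha$, $\beta$, $\gamma$ is such a solution; the solution space is $2$-dimensional, parametrized by $(a(-1),a(0))$. Since the coefficients are $2$-periodic, i.e.\ $r_{k+1}=r_{k-1}$, both the shift operator $a(k)\mapsto a(k+2)$ and the ``twisted shift'' $a(k)\mapsto r_{k+1}a(k+1)$ send solutions to solutions; the latter is verified by a short direct computation that hinges on the arithmetic identity $r_{k-1}r_k=bc$. Since the recurrence is linear, any $\mathbb{Q}$-linear combination of solutions is again a solution, so any identity between two solutions is settled by checking two consecutive values of $k$.

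For the linear identities \eqref{4.6:eq1}--\eqref{4.6:eq4} I will, after an appropriate index shift, express both sides as solutions of the common recurrence using the two operations above, and then verify equality at $k=-1$ and $k=0$ in each of the cases $\min(b,c)>1$ and $c=1$. For instance, \eqref{4.6:eq1} becomes $\beta(k)=2\alpha(k)+\alpha(k+2)$, whose right-hand side is a solution because $\alpha$ and its shift are; direct substitution of the initial data matches the prescribed $\beta(-1),\beta(0)$. Identities \eqref{4.6:eq2} and \eqref{4.6:eq4} fit the same template with the twisted shift playing the role of $r_{k+1}\alpha(k+1)$, while \eqref{4.6:eq3} follows from \eqref{4.6:eq4} (shifted) combined with the recurrence $\alpha(k+1)=r_k\alpha(k)-\alpha(k-1)$.

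For the quadratic identities \eqref{4.6:eq5}, the strategy is to show each of the four listed expressions is independent of $k$ and then to evaluate at $k=0$. The first two, $\alpha(k-1)\beta(k)-\alpha(k)\beta(k-1)$ and $\alpha(k)\gamma(k-1)-\alpha(k-1)\gamma(k)$, are the classical Casoratians of pairs of solutions of a three-term recurrence whose coefficient of $a(k-2)$ is $-1$; their $k$-invariance is a one-line substitution. The equality of the third expression, $r_{k-1}\alpha(k-1)\alpha(k+1)-r_k\alpha(k)^2$, with the fourth, $Q_k:=bc\,\alpha(k-1)\alpha(k)-r_{k-1}\alpha(k-1)^2-r_k\alpha(k)^2$, follows by direct substitution of $\alpha(k+1)=r_k\alpha(k)-\alpha(k-1)$. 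The constancy of $Q_k$ itself is the most delicate step: expanding $Q_{k+1}-Q_k$ and applying $\alpha(k+1)=r_k\alpha(k)-\alpha(k-1)$ together with both $r_{k-1}r_k=bc$ and the periodicity $r_{k+1}=r_{k-1}$, one checks that every term cancels. Evaluating each constant at $k=0$ yields $bc-b-c$ when $\min(b,c)>1$ and $b-4$ when $c=1$; these match $\delta(b,c)$, which is positive in the wild case by an elementary check.

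Finally, the strict inequality $\alpha(k)<\beta(k)$ for $k\ge 0$ is immediate from \eqref{4.6:eq1} rewritten as $\beta(k)=2\alpha(k)+\alpha(k+2)$, provided $\alpha(k)>0$ for all $k\ge -1$. The latter is a short induction using $r_k\ge 2$ and the positive initial data $\alpha(-1),\alpha(0)\ge 1$: one verifies simultaneously that $\alpha(k)\ge 1$ and $\alpha(k+1)\ge\alpha(k)$ for all $k\ge 0$. The single substantial point is the $k$-invariance of the quadratic form $Q_k$; everything else amounts to ``solution-space arithmetic'' plus small initial-value checks.
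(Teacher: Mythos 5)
Your overall framework (two-dimensional solution space of the common recurrence, Casoratian invariance for the quadratic expressions, initial-value checks at $k=-1,0$) is sound and is essentially a structured version of the paper's one-line ``all identities can be easily proved by induction.'' However, your argument for the inequality $\alpha(k)<\beta(k)$ has a genuine gap: the induction ``$\alpha(k)\ge 1$ and $\alpha(k+1)\ge\alpha(k)$ using $r_k\ge 2$'' fails in the case $c=1$, where $r_k=c=1$ for $k$ even. Concretely, with initial data $(\alpha(0),\alpha(-1))=(2,1)$ one gets $\alpha(1)=c\cdot 2-1=1<2=\alpha(0)$ and later $\alpha(3)=b-3<b-2=\alpha(2)$, so the claimed monotonicity is simply false there, and the inductive step $\alpha(k+1)=r_k\alpha(k)-\alpha(k-1)\ge\alpha(k)$ cannot be run. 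Positivity of $\alpha(k)$ does hold, but it needs a different argument. The cheapest repair from within your own toolkit is to use the invariant \eqref{4.6:eq5} that you have already established: $r_{k-1}\alpha(k-1)\alpha(k+1)-r_k\alpha(k)^2=\delta(b,c)>0$ gives $\alpha(k+1)=\bigl(r_k\alpha(k)^2+\delta(b,c)\bigr)/\bigl(r_{k-1}\alpha(k-1)\bigr)>0$ whenever $\alpha(k-1)>0$, and induction from $\alpha(-1),\alpha(0)>0$ finishes it. The paper instead argues differently: it observes that $(\beta(0)-\alpha(0),\beta(-1)-\alpha(-1))$ is a positive imaginary root (checking $Q((c-1)b,c)<0$, resp.\ $Q(b,2)<0$), and since $\Phi^{\rm im}_+\subset\ZZ_{>0}^2$ is $W(A)$-invariant and the sequence $(\beta-\alpha)(k)$ is the $W$-orbit of that root, $\beta(k)-\alpha(k)>0$ for all $k\ge 0$. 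Either fix is fine; yours requires no new input beyond \eqref{4.6:eq5}.

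A second caveat: when you carry out the initial-value checks for \eqref{4.6:eq4}, the identity as printed, $\alpha(k)+r_k\alpha(k-1)=\gamma(k)$, fails already at $k=0$ (it gives $1+c=b+1$ in the case $\min(b,c)>1$). The identity that your ``twisted shift'' template actually produces, and the one the paper uses later in the proof of Lemma \ref{lemma:eq2}, is $\gamma(k)=\alpha(k)+r_{k-1}\alpha(k-1)$; the printed $r_k$ is an index slip. Be sure to prove the corrected version, since deriving \eqref{4.6:eq3} from \eqref{4.6:eq4} via $\alpha(k-2)=r_{k-1}\alpha(k-1)-\alpha(k)$, as you propose, only works with $r_{k-1}$ in place.
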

\begin{proof}
All identities can be easily proved by induction. To prove $\alpha(k)<\beta(k)$, it suffices to show that $(\beta(0)-\alpha(0),\beta(-1)-\alpha(-1))$ is a positive imaginary root. This is true because in the case $\min(b,c)>1$ we have $Q((c-1)b,c)=bc(b+c-bc)<0$ and in the case $c=1, b>4$, we have 
$Q(b,2)=b(4-b)<0$. 
\end{proof}

We denote by $\myPS[\alpha(k),\alpha(k-1)]$ the pointed support of $\myx[\alpha(k),\alpha(k-1)]\in\mathcal{A}(r_{k-1},r_k)$. Define the map $\varphi_{\alpha(k),\alpha(k-1)}$ by sending $(p,q)$ to $(-\alpha(k)+r_{k-1}p,-\alpha(k-1)+r_kq)$. Similar notation applies with $\alpha$ being replaced by $\beta$ and $\gamma$. Now we are ready to prove Lemma \ref{lemma:eq2}.

\begin{proof}[Proof of Lemma \ref{lemma:eq2}]
The positivity of \eqref{eq2} is equivalent to saying that the support of $\myx[\gamma(k),\gamma(k-1)]$ contains the point $(\alpha(k-2),-\alpha(k-1))$, which follows immediately from Proposition \ref{prop:support} and Lemma \ref{relation A B}. Indeed, 
we need to show that $(p,q):=\varphi^{-1}(\alpha(k-2),-\alpha(k-1))$ lies in the region $\myPS[\gamma(k),\gamma(k-1)]$. 
It is easy to check that 
$$p=(\alpha(k-2)+\gamma(k))/r_{k-1}\stackrel{\eqref{4.6:eq3}}{=}2\alpha(k-1)>\gamma(k)/r_{k-1},$$ 
$$q=(\gamma(k-1)-\alpha(k-1))/r_{k}\stackrel{\eqref{4.6:eq4}}{=}\alpha(k-2)>0.$$  So we only need to show that $(p,q)$ is below the line passing through $(\gamma(k)/r_{k-1},\gamma(k-1)/r_k)$ and $(\gamma(k-1),0)$, which is equivalent to the statement that the three points $(\gamma(k-1),0)$, $(\gamma(k)/r_{k-1},\gamma(k-1)/r_k)$ and $(p,q)$ are in counter-clockwise order. Therefore it follows from
\small
$$
\aligned
&
\begin{vmatrix}1&\gamma(k-1)&0\\1&\gamma(k)/r_{k-1}& \gamma(k-1)/r_k\\1&p&q\\\end{vmatrix}
=\frac{1}{r_{k-1}r_k}\begin{vmatrix}1&r_{k-1}\gamma(k-1)&0\\1&\gamma(k)& \gamma(k-1)\\1&\alpha(k-2)+\gamma(k)&\gamma(k-1)-\alpha(k-1)\\\end{vmatrix}\\
&
=\frac{1}{bc}\begin{vmatrix}0&\gamma(k-2)&-\gamma(k-1)\\1&\gamma(k)& \gamma(k-1)\\0&\alpha(k-2)&-\alpha(k-1)\\\end{vmatrix}
=\frac{1}{bc}\begin{vmatrix}\gamma(k-2)&\gamma(k-1)\\
\alpha(k-2)&\alpha(k-1)\\\end{vmatrix}\stackrel{\eqref{4.6:eq5}}{=}\frac{\delta(b,c)}{bc}>0.
\endaligned
$$
\normalsize
\end{proof}

Lemma \ref{lemma:eq1} follows from the following lemma.
\begin{lemma}\label{mu}
Let $u_i$, $v_i$ be edges in $\mathcal{D}^{\alpha(k)\times \alpha(k-1)}$, and $u'_i$, $v'_i$ be edges in $\mathcal{D}^{\beta(k)\times \beta(k-1)}$. The map 
$$\mu: 
\Bigg{\{}
\begin{array}{l}
\mbox{Compatible pairs }\\
 \mbox{in $\mathcal{D}^{\alpha(k)\times \alpha(k-1)}$}\\
 \end{array} 
 \Bigg{\}}\setminus \Big{\{}(\emptyset,\mathcal{D}^{\alpha(k)\times \alpha(k-1)}_2)\Big{\}}
 \to
\Bigg{\{}
\begin{array}{l}
\mbox{Compatible pairs }\\
 \mbox{in $\mathcal{D}^{\beta(k)\times \beta(k-1)}$}\\
 \end{array} 
 \Bigg{\}} 
 $$
defined by $\mu(S_1,S_2)=(S'_1,S'_2)$ where
$$\aligned
&S'_1=\{u'_1,\dots,u'_{\alpha(k)}\}\cup\{u'_{i+\alpha(k)}| u_i\in S_1\},\\
&S'_2=\{v'_{i+\alpha(k-1)}|v_i\in S_2\}\cup\{v'_{2\alpha(k-1)+1},\dots,v'_{\beta(k-1)}\}.\\
\endaligned
$$
is a well-defined injective map satisfying
$$\aligned
&|S'_1|=|S_1|+(\beta(k-1)-\alpha(k-1))/r_{k}=|S_1|+\alpha(k),\\
&|S'_2|=|S_2|+(\beta(k)-\alpha(k))/r_{k-1}=|S_2|+\alpha(k+1).\big.
\endaligned$$
\end{lemma}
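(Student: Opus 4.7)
The lemma asserts four things: (i) that $\mu$ is defined, i.e., the output $(S'_1, S'_2)$ is a pair of valid edge-subsets in $\mathcal{D}^{\beta(k)\times\beta(k-1)}$; (ii) the cardinality formulas; (iii) that $\mu$ is injective; and (iv) that $\mu(S_1, S_2)$ is compatible. Items (i), (ii), and (iii) are essentially bookkeeping, while (iv) carries the geometric content.

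For (i) we need $2\alpha(k)\le\beta(k)$ and $2\alpha(k-1)\le\beta(k-1)$, each equivalent (by \eqref{4.6:eq2} and the recursion \eqref{a}) to the positivity of $\alpha(k+1)$ (resp.\ $\alpha(k)$), which is established by an easy induction using $bc>4$. The identities $\beta(k-1)-\alpha(k-1)=r_k\alpha(k)$ and $\beta(k)-\alpha(k)=r_{k-1}\alpha(k+1)$ (using $r_{k-1}=r_{k+1}$) then yield the cardinality formulas $|S'_1|=|S_1|+\alpha(k)$ and $|S'_2|=|S_2|+(\beta(k-1)-2\alpha(k-1))=|S_2|+\alpha(k+1)$. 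For (iii), $\mu$ is inverted by reading off $S_1$ from $S'_1\cap\{u'_{\alpha(k)+1},\ldots,u'_{2\alpha(k)}\}$ and $S_2$ from $S'_2\cap\{v'_{\alpha(k-1)+1},\ldots,v'_{2\alpha(k-1)}\}$, each after an index shift; the prefix $u'_1,\ldots,u'_{\alpha(k)}$ and the suffix $v'_{2\alpha(k-1)+1},\ldots,v'_{\beta(k-1)}$ are common to every image and carry no extra information.

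The main content is (iv). For each pair $u'\in S'_1$, $v'\in S'_2$, denote by $E'$ the left endpoint of $u'$ and by $F'$ the top endpoint of $v'$. The plan is to verify, for every such pair, at least one of the strict inequalities
\[
|(E'F')_1|>b\,|(E'F')_2\cap S'_2|\quad\text{or}\quad|(E'F')_2|>c\,|(E'F')_1\cap S'_1|,
\]
after which the intermediate-value argument of Lemma~\ref{extremal pair compatible} produces a compatibility witness $A'\in (E'F')^\circ$. This reduces the lemma to a counting estimate, handled by case-splitting on which of the four blocks (prefix of $S'_1$ versus shifted-$S_1$ block of $S'_1$; shifted-$S_2$ block of $S'_2$ versus suffix of $S'_2$) the edges $u'$ and $v'$ lie in. When $u'$ belongs to the prefix or $v'$ to the suffix, the argument parallels the proof of Proposition~\ref{prop:support}: the required inequality follows from the identities \eqref{4.6:eq2}--\eqref{4.6:eq5} together with the fact that $(\beta(k),\beta(k-1))$ satisfies $Q(\beta(k),\beta(k-1))<0$, which is a direct consequence of Lemma~\ref{lem:imaginary-components}. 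When $u'=u'_{i+\alpha(k)}$ with $u_i\in S_1$ and $v'=v'_{j+\alpha(k-1)}$ with $v_j\in S_2$, one starts with the original compatibility witness $A$ for $(u_i,v_j)$ in $(S_1,S_2)$ and uses the extra horizontal and vertical edges that $(E'F')$ picks up relative to a shifted copy of $(EF)$ to promote the equality at $A$ to one of the strict inequalities at $E'F'$.

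The main obstacle I anticipate is this last, shifted-middle case: showing that enlarging from the small to the big Dyck path always converts the compatibility equality into a strict inequality rather than leaving it as equality. Borderline equality is exactly what would break the argument, and this single borderline configuration corresponds precisely to the excluded pair $(\emptyset,\mathcal{D}^{\alpha(k)\times\alpha(k-1)}_2)$ in the domain of $\mu$; its would-be image encodes the monomial $x_1^{\alpha(k-2)}x_2^{-\alpha(k-1)}$ that is subtracted separately in \eqref{eq1}. Tracking the boundary between strictness and equality throughout the case analysis, using the identities of Lemma~\ref{relation A B} and in particular the determinant identity $\alpha(k-1)\beta(k)-\alpha(k)\beta(k-1)=\delta(b,c)>0$, is the delicate part of the bookkeeping.
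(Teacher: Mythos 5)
Your reduction of the lemma to item (iv) and your handling of (i)--(iii) are fine, but the strategy you propose for (iv) has two genuine gaps. First, your plan is to verify, for every $u'\in S'_1$ and $v'\in S'_2$, one of the strict endpoint inequalities of Lemma~\ref{extremal pair compatible} and then invoke the intermediate-value argument. Those inequalities are only a \emph{sufficient} condition for the existence of a witness $A$, and they will in general fail for pairs in the image of $\mu$: when both $u'=u'_{i+\alpha(k)}$ and $v'=v'_{j+\alpha(k-1)}$ come from the shifted blocks, the compatibility of $(S_1,S_2)$ gives you an \emph{interior} point $A$ with $f_{S_2}(AF_{v_j})=0$ or $g_{S_1}(E_{u_i}A)=0$, and nothing forces either strict inequality to hold at the endpoints $E',F'$ of the larger subpath. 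What actually works (and what the paper does) is to transport the interior witness $A$ itself to the big Dyck path, observe that the relevant quantity becomes $\le 0$ there, and then use the relaxation of the equality in \eqref{0407df:comp} to an inequality (Lemma~\ref{easy lemma}); your phrase about ``promoting the equality at $A$ to one of the strict inequalities at $E'F'$'' conflates the subpath $AF_v$ with $E_uF_v$ and does not yield a valid step. Moreover, even this transport is only legitimate because the subpath of $\mathcal{D}^{\beta(k)\times\beta(k-1)}$ between $B=(\alpha(k),\alpha(k-1))$ and $C=(2\alpha(k),2\alpha(k-1))$ is (up to two boundary edges) a congruent copy of $\mathcal{D}^{\alpha(k)\times\alpha(k-1)}$ --- a nontrivial lattice-geometry fact (Lemma~\ref{lemma:similar path}, proved by a descent on lattice points in triangles) that your proposal assumes implicitly and never addresses.

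Second, for the mixed cases (one of $u',v'$ in the extremal prefix or suffix, the other in a shifted block) you say the argument ``parallels the proof of Proposition~\ref{prop:support}''; but that proposition concerns \emph{extremal} pairs, while $S'_1$ and $S'_2$ are not extremal (they contain arbitrary translates of $S_1$ and $S_2$), so the inequality chain there does not apply as stated. The paper bridges this by a separate device: assuming $(S'_1,S'_2)$ incompatible, it pushes the offending edges into extremal position to manufacture an \emph{incompatible extremal} pair on the big path and a \emph{compatible extremal} pair of the corresponding size on the small path, and then contradicts the support-comparison Lemma~\ref{support comparison} (itself resting on Lemma~\ref{support comparison preparation} and a Pick's-theorem computation). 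Some argument of this kind --- or another mechanism for handling non-extremal pairs --- is needed; without it your case analysis does not close.
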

Indeed, assume Lemma \ref{mu} is true. Since the pair $(S_1,S_2)$ contributes a Laurent monomial $x_1^{-\alpha(k)+r_{k-1}p}x_2^{-\alpha(k-1)+r_{k}q}$ to $\myx[\alpha(k),\alpha(k-1)]$, while $(S_1',S_2')$ contributes the same Laurent monomial to  $\myx[\beta(k),\beta(k-1)]$ because
 $$
 x_1^{-\beta(k)+r_{k-1}(p+\alpha(k+1))}x_2^{-\beta(k-1)+r_{k}(q+\alpha(k))}
 \stackrel{\eqref{4.6:eq2}}{=} x_1^{-\alpha(k)+r_{k-1}p}x_2^{-\alpha(k-1)+r_{k}q},
 $$ 
every term in $\myx[\alpha(k),\alpha(k-1)]-x_1^{\alpha(k-2)}x_2^{-\alpha(k-1)}$ is cancelled out by a term in $\myx[\beta(k),\beta(k-1)]$, which implies Lemma \ref{lemma:eq1}.
\medskip

The rest of the paper is devoted to the proof of  Lemma \ref{mu}. For $P\subseteq\mathbb{Z}^2$ and $(d_1,d_2)\in\mathbb{Z}^2$, we denote
$$P-(d_1,d_2)=\{(i-d_1,j-d_2)|(i,j)\in P\}.$$

\begin{lemma}\label{support comparison preparation}  Let $k\geq 0$. Then
$$
\myPS[\alpha(k),\alpha(k-1)])\setminus \{(\alpha(k-1),0)\} \subseteq \myPS[\beta(k),\beta(k-1)]-(\alpha(k+1),\alpha(k))
$$
\end{lemma}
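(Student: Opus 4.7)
The plan is to invoke the Corollary to Proposition~\ref{prop:support} to reduce the lemma to a statement about lattice points in the regions $P_\alpha:=P[\alpha(k),\alpha(k-1)]$ and $P_\beta:=P[\beta(k),\beta(k-1)]$ inside $\mathcal{A}(r_{k-1},r_k)$, and then verify the required containment by a case analysis using the explicit characterization of membership in $P$ from the proof of Proposition~\ref{prop:support}. First I would check that both $(\alpha(k),\alpha(k-1))$ and $(\beta(k),\beta(k-1))$ are positive imaginary roots for the quadratic form $Q_k(x,y):=r_k x^2-bc\,xy+r_{k-1}y^2$ attached to $\mathcal{A}(r_{k-1},r_k)$. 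For the $\alpha$-pair this is a direct restatement of \eqref{4.6:eq5}, which gives $Q_k(\alpha(k),\alpha(k-1))=-\delta(b,c)<0$. For the $\beta$-pair, a direct computation from the initial data in Definition~\ref{definition:alpha} yields $Q_0(\beta(0),\beta(-1))=-(2bc+1)\delta(b,c)<0$ uniformly in both cases of that definition; Weyl invariance of $Q$ combined with the fact that $(\beta(k),\beta(k-1))$ (possibly after swapping the two coordinates) is the image of $(\beta(0),\beta(-1))$ under $w(1;k)$ then yields $Q_k(\beta(k),\beta(k-1))<0$ for every $k\ge 0$. The Corollary now identifies $\myPS[\alpha(k),\alpha(k-1)]=P_\alpha\cap\mathbb{Z}^2$ and $\myPS[\beta(k),\beta(k-1)]=P_\beta\cap\mathbb{Z}^2$, so the lemma reduces to the claim that every $(p,q)\in P_\alpha\cap\mathbb{Z}^2$ with $(p,q)\neq(\alpha(k-1),0)$ satisfies $(\tilde p,\tilde q):=(p+\alpha(k+1),\,q+\alpha(k))\in P_\beta$.

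The geometric pivot is that the interior vertex $V_\alpha:=(\alpha(k)/r_{k-1},\alpha(k-1)/r_k)$ of $P_\alpha$ translates exactly to $V_\beta:=(\beta(k)/r_{k-1},\beta(k-1)/r_k)$, via \eqref{4.6:eq2} applied at indices $k-1$ and $k$ (and $r_{k+1}=r_{k-1}$). Using the characterizations \eqref{eq:cond1}, \eqref{eq:cond2} from the proof of Proposition~\ref{prop:support} (with $a_1,a_2$ specialized to either $\alpha(k),\alpha(k-1)$ or $\beta(k),\beta(k-1)$), I would split into three cases. The axis cases $p=0$, $0\le q\le\alpha(k)$, and $q=0$, $0\le p\le\alpha(k-1)-1$, are handled by direct verification of \eqref{eq:cond1} and \eqref{eq:cond2} for $\beta$ respectively, the key input being the translation identities from \eqref{4.6:eq2}. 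For interior $(p,q)$ with $p,q>0$ satisfying \eqref{eq:cond1} for $\alpha$, one checks via \eqref{4.6:eq2} that the first part of \eqref{eq:cond1} for $\beta$ applied to $(\tilde p,\tilde q)$ is identical to the first part for $\alpha$; the second part becomes a slope inequality whose threshold $\beta(k)/\beta(k+1)$ is strictly less than the corresponding $\alpha$-threshold $\alpha(k)/\alpha(k+1)$, thanks to $\alpha(k)\beta(k+1)-\alpha(k+1)\beta(k)=\delta(b,c)>0$ from \eqref{4.6:eq5}.

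The one delicate sub-case is interior $(p,q)$ satisfying \eqref{eq:cond2} for $\alpha$ with additionally $r_{k-1}p>\alpha(k)$, so that $(p,q)$ lies to the right of $V_\alpha$. Here the naive slope manipulation points the wrong way, because \eqref{4.6:eq5} at shift $-1$ gives $\beta(k-1)/\beta(k-2)>\alpha(k-1)/\alpha(k-2)$. The plan is to bypass the slope argument entirely: setting $A:=\alpha(k-1)-r_k q$ and $B:=r_{k-1}p-\alpha(k)>0$, substituting $\beta(k-2)=\alpha(k-2)+r_{k-1}\alpha(k-1)$ and $\beta(k-1)=\alpha(k-1)+r_k\alpha(k)$ into $\beta(k-2)A-B\beta(k-1)$, and collapsing the cross terms via \eqref{4.6:eq5}, one obtains
\begin{equation*}
\beta(k-2)A-B\,\beta(k-1)=\bigl(\alpha(k-2)A-B\,\alpha(k-1)\bigr)+bc\bigl(\alpha(k-1)\alpha(k)-\alpha(k-1)q-\alpha(k)p\bigr)-\delta(b,c).
\end{equation*}
The first parenthesis is strictly positive by \eqref{eq:cond2} for $\alpha$. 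The quantity $\alpha(k-1)\alpha(k)-\alpha(k-1)q-\alpha(k)p$ is a nonnegative integer by Lemma~\ref{lemma: concave P} applied to $(p,q)\in P_\alpha$, and it vanishes precisely on the hypotenuse $\alpha(k)p+\alpha(k-1)q=\alpha(k)\alpha(k-1)$. Since the recurrence \eqref{a} propagates $\gcd(\alpha(k),\alpha(k-1))=\gcd(\alpha(0),\alpha(-1))=1$, the only lattice points on this hypotenuse are $(\alpha(k-1),0)$ and $(0,\alpha(k))$; the first is excluded by hypothesis and the second has $p=0$, contradicting $r_{k-1}p>\alpha(k)$. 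Hence the second parenthesis contributes at least $bc$, and since $\delta(b,c)<bc$ in both cases of Definition~\ref{definition:alpha}, the right-hand side is strictly positive, which is \eqref{eq:cond2} for $\beta$ applied to $(\tilde p,\tilde q)$.
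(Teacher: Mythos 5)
Your argument is correct, and it shares the paper's two essential ingredients: the reduction via Proposition~\ref{prop:support} and its Corollary to a containment of lattice-point regions, and the pivotal observation that the interior vertex $(\alpha(k)/r_{k-1},\alpha(k-1)/r_k)$ is carried exactly onto $(\beta(k)/r_{k-1},\beta(k-1)/r_k)$ by the translation (your $V_\alpha\mapsto V_\beta$ is the paper's $L=L'$). Where you genuinely diverge is in how the containment is then verified. The paper argues geometrically: it checks that $(0,0)$ and $(0,\alpha(k))$ land strictly inside the larger quadrilateral via orientation determinants, and disposes of the only problematic corner by computing that the sliver triangle with vertices $(\alpha(k-1),0)$, $L$, and $(2\alpha(k-1),-\alpha(k))$ has area $\delta(b,c)/(2bc)<1/2$, so that Pick's theorem forbids any lattice point there beyond the two excluded ones. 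You instead run an explicit case analysis on \eqref{eq:cond1}--\eqref{eq:cond2}, and in the delicate corner case you replace Pick's theorem by the fact that $\gcd(\alpha(k),\alpha(k-1))=1$ (propagated by \eqref{a}, so the hypotenuse of Lemma~\ref{lemma: concave P} carries no interior lattice points) together with the bound $\delta(b,c)<bc$. These are the same quantitative fact in two guises --- the triangle has area less than $1/2$ precisely because $\delta(b,c)<bc$ --- so neither route is stronger, but the paper's is shorter while yours avoids Pick's theorem and stays entirely within the inequality framework already set up for Proposition~\ref{prop:support}. Two points to tighten in a write-up: the ``direct verification'' of the axis case $q=0$ does require checking $r_{k-1}\beta(k-1)>\delta(b,c)$ at the worst point $p=\alpha(k-1)-1$, and in the slope comparison of your second case you should split on the sign of $r_k q-\alpha(k-1)$ before passing to the quotient; both are routine but not literally immediate.
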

\begin{proof}
Proposition \ref{prop:support} asserts that $\myPS[\alpha(k),\alpha(k-1)]$ is the set of lattice points in the region bounded by the broken line
$$O=(0,0), \,\,K=(\alpha(k-1),0), \,\,L=(\alpha(k)/r_{k-1}, \alpha(k-1)/r_k), \,\, M=(0,\alpha(k)), \,\, (0,0),$$ 
$\myPS[\beta(k),\beta(k-1)]-(\alpha(k+1),\alpha(k))$ is the set of lattice points  in the region bounded by the broken line $O'K'L'M'O'$ with
$O'=(-\alpha(k+1),-\alpha(k))$, $K'=(-\alpha(k+1)+\beta(k-1),-\alpha(k))\stackrel{\eqref{4.6:eq1}}{=}(2\alpha(k-1),-\alpha(k))$, $L'=(-\alpha(k+1)+\beta(k)/r_{k-1}, -\alpha(k)+\beta(k-1)/r_{k})\stackrel{\eqref{4.6:eq2}}{=}L$, $M'=(-\alpha(k+1),-\alpha(k)+\beta(k))\stackrel{\eqref{4.6:eq2}}{=}(-\alpha(k+1),r_{k+1}\alpha(k+1))$.  Note that $O', K', L', M'$ are in the third, fourth, first, and second quadrant, respectively, as shown in Figure \ref{OKLM}. 

\begin{figure}[h]
\begin{tikzpicture}[scale=.8]
\usetikzlibrary{patterns}
\draw (0,0)--(3,0)--(1,1)--(0,2);
\fill [black!10] (0,0)--(3,0)--(1,1)--(0,2)--(0,0);
\draw (-1,-1)--(4,-1)--(1,1)--(-1,4)--(-1,-1);
\draw (0,0) node[anchor=north east] {\tiny{$O$}};
\draw (3,0) node[anchor=south west] {\tiny{$K$}};
\draw (0,2) node[anchor=east] {\tiny{$M$}};
\draw (1,1) node[anchor=south west] {\tiny{$L=L'$}};
\draw (-1,-1) node[anchor=east] {\tiny{$O'$}};
\draw (4,-1) node[anchor=west] {\tiny{$K'$}};
\draw (-1,4) node[anchor=east] {\tiny{$M'$}};
\draw[->] (-3,0) -- (5,0);
\draw[->] (0,-2) -- (0,5);
\draw[dashed](4,-1)--(3,0);
\end{tikzpicture}
  \caption{}
  \label{OKLM}
\end{figure}
 
First, note that $O$ obviously lies strictly inside the region bounded by $O'K'L'M'O'$. 

Next, we claim that $M$ also lies strictly inside the same region.  Indeed, it suffices to show that $L'M'M$ is anti-clockwise oriented, or equivalently, the following determinant
\small
$$
\begin{vmatrix}1&\alpha(k)/r_{k-1}&\alpha(k-1)/r_k\\1&-\alpha(k+1)&r_{k+1}\alpha_{k+1}\\1&0&\alpha(k)\\\end{vmatrix}
\stackrel{\eqref{a}}{=}\frac{1}{bc}(r_{k-1}\alpha(k-1)\alpha(k+1)-r_{k}\alpha(k)^2)
\stackrel{\eqref{4.6:eq5}}{=}\frac{\delta(b,c)}{bc}
$$
\normalsize
is positive, which is obviously the case.

Finally, the area of the triangle $KLK'$ is
\small
$$
\frac{1}{2}\begin{vmatrix}1&\alpha(k-1)&0\\1&\alpha(k)/r_{k-1}&\alpha(k-1)/r_k\\1&2\alpha(k-1)&-\alpha(k)\\\end{vmatrix}
\stackrel{\eqref{a}}{=}\frac{1}{2bc}(bc\alpha(k-1)\alpha(k)-r_{k-1}\alpha(k-1)^2-r_{k}\alpha(k)^2)
\stackrel{\eqref{4.6:eq5}}{=}\frac{\delta(b,c)}{2bc}.$$
\normalsize
Thus $KLK'$ is anti-clockwise oriented. Moreover, since the area is less than $1/2$, Pick's theorem asserts that there is no lattice points $P$ other than $K$ and $K'$ that lie inside or on the boundary of the triangle $KLK'$ (otherwise the triangle $KPK'$, which is contained in the triangle $KLK'$, would have area at least $1/2$). Therefore every lattice point other than $K$ in the region bounded by $OKLMO$ must be in the region bounded by $O'K'L'M'O'$.
\end{proof}

\begin{lemma}\label{support comparison}  Let $k,p,q\geq 0$,  $p<\alpha(k-1)$. If the extremal pair $(S_1,S_2)$ of size $(q;p)$ is compatible in $\mathcal{D}^{\alpha(k)\times \alpha(k-1)}$, then the extremal pair $(S'_1,S'_2)$ of size $(q+\alpha(k);p+\alpha(k+1))$ is  compatible in $\mathcal{D}^{\beta(k)\times \beta(k-1)}$.
\end{lemma}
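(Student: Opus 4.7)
The strategy is to package the geometric content of Lemma~\ref{support comparison preparation} through the equivalence ``the extremal pair of size $(q;p)$ in $\mathcal{D}^{a_1\times a_2}$ is compatible $\Longleftrightarrow$ $(p,q)\in P[a_1,a_2]\cap\mathbb{Z}^2$'', which holds whenever $(a_1,a_2)$ is a positive imaginary root by Proposition~\ref{prop:support} and its Corollary.

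First I would verify that both $(\alpha(k),\alpha(k-1))$ and $(\beta(k),\beta(k-1))$ are positive imaginary roots in $\mathcal{A}(r_{k-1},r_k)$. For the $\alpha$-pair, identity \eqref{4.6:eq5} rewrites as $r_k\alpha(k)^2 - bc\,\alpha(k-1)\alpha(k) + r_{k-1}\alpha(k-1)^2 = -\delta(b,c)<0$, which is exactly the negativity of the form $Q$ for the Cartan matrix $A(r_{k-1},r_k)$ evaluated at $(\alpha(k),\alpha(k-1))$. For the $\beta$-pair, I would observe that $(\beta(k),\beta(k-1))$ (up to coordinate swap depending on the parity of $k$) lies in the $W$-orbit of the initial vector $(\beta(-1),\beta(0))$, which is a positive imaginary root by Lemma~\ref{lem:imaginary-components}; since $Q$ is $W$-invariant, the same holds for $(\beta(k),\beta(k-1))$.

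Next, the compatibility of the extremal pair $(S_1,S_2)$ of size $(q;p)$ in $\mathcal{D}^{\alpha(k)\times\alpha(k-1)}$ guarantees that it contributes nontrivially to the coefficient $c(p,q)$ of $\myx[\alpha(k),\alpha(k-1)]$, so $(p,q)\in\myPS[\alpha(k),\alpha(k-1)]$. By the Corollary to Proposition~\ref{prop:support} this set coincides with the lattice points of $P[\alpha(k),\alpha(k-1)]$. The hypothesis $p<\alpha(k-1)$ rules out the unique point $(\alpha(k-1),0)$ excluded in Lemma~\ref{support comparison preparation}, so that lemma yields $(p+\alpha(k+1),\,q+\alpha(k))\in\myPS[\beta(k),\beta(k-1)]$, i.e.\ a lattice point of $P[\beta(k),\beta(k-1)]$.

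Finally, I would apply Proposition~\ref{prop:support} to the positive imaginary root $(\beta(k),\beta(k-1))$ at the lattice point $(p+\alpha(k+1),\,q+\alpha(k))$. Both coordinates are strictly positive, since the entries $\alpha(k),\alpha(k+1)$ of positive imaginary roots are positive integers in the wild case. The proposition then asserts that the extremal pair of size $(q+\alpha(k);\,p+\alpha(k+1))$ in $\mathcal{D}^{\beta(k)\times\beta(k-1)}$ is compatible, which is precisely the desired conclusion. The main geometric obstacle has already been handled by the Pick's-theorem argument of Lemma~\ref{support comparison preparation}; the only delicate point in the present proof is the parity-sensitive bookkeeping needed to confirm that $\mathcal{A}(r_{k-1},r_k)$ is the correct ambient algebra for both pairs, and this is immediate from the identities in Lemma~\ref{relation A B} and the $W$-invariance of $Q$.
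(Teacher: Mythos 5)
Your proposal is correct and follows essentially the same route as the paper, whose proof of this lemma is the one-line statement that it ``follows immediately from Proposition~\ref{prop:support} and Lemma~\ref{support comparison preparation}''; you have simply spelled out the intended deduction (translating compatibility of extremal pairs into membership in the pointed supports, checking that both $(\alpha(k),\alpha(k-1))$ and $(\beta(k),\beta(k-1))$ are positive imaginary roots for $\mathcal{A}(r_{k-1},r_k)$, and noting that $p<\alpha(k-1)$ excludes the exceptional point $(\alpha(k-1),0)$). No gaps.
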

\begin{proof}
It follows immediately from Proposition \ref{prop:support} and Lemma \ref{support comparison preparation}.
\end{proof}

For any horizontal or vertical edge $u$, we denote
$$
\aligned
&E_u=\textrm{the left/lower endpoint of } u,\\
&F_u=\textrm{the right/upper endpoint of } u.\\
\endaligned
$$

To compare compatible pairs in $\mathcal{D}^{\alpha(k)\times \alpha(k-1)}$ with compatible pairs in $\mathcal{D}^{\beta(k)\times\beta(k-1)}$, we need the following crucial Lemma \ref{lemma:similar path} which roughly says that a certain subpath of the Dyck path $\mathcal{D}^{\beta(k)\times \beta(k-1)}$ is almost identical with $\mathcal{D}^{\alpha(k)\times \alpha(k-1)}$.  

We introduce some notation (see Figure \ref{fig:corner}). Let $B=(\alpha(k),\alpha(k-1))$, $C=(2\alpha(k),2\alpha(k-1))$, $B'=E_{u'_{\alpha(k)+1}}$, $C'=F_{v_{2\alpha(k-1)}}$ (so $C'$ is of the same height as $C$), $G'=B+(1,0)=(\alpha(k)+1,\alpha(k-1))$, $H'=C-(0,1)=(2\alpha(k),2\alpha(k-1)-1)$, $T$ be the intersection of the line $CH'$ with the diagonal $\overline{O'P'}$ with $O'=(0,0)$ and $P'=(\beta(k),\beta(k-1))$. 
Let $BG'H'C$ be the path obtained by taking the union of  the edge $BG'$, the subpath $G'H'$, and the edge $H'C$. Moreover, we denote $B$ by $B_k$ (and we use the subscript $k$ in a similar fashion for other letters) if we need to specify the dependence on $k$.

\begin{figure}[h]
\begin{tikzpicture}[scale=.6]
\draw[line width=.5,color=gray!50] (0,0)--(19,0)--(19,8)--(0,8)--(0,0);
\draw[line width=.5,color=black] (0,0)--(3,0)--(3,1)--(5.4,1)--(5.4,2)
                                                 --(8,2)--(8,3)--(10,3)--(10,4)
                                                 --(10,4)--(13,4)--(13,5)--(15,5)--(15,6)
                                                 --(15,6)--(17,6)--(17,7)--(19,7)--(19,8);
\draw[line width=.5,color=gray!80] (0,0)--(19,8) (0,0)--(8.5,4);
\draw[line width=1, dotted](8.5,4)--(8.5,3);
\draw[line width=1, dotted](4.25,2)--(5.4,2);
\filldraw[black] (4.25,1) circle(2pt);\draw(4.25,1) node[anchor=north]{$B'$};
\filldraw[black] (4.25,2) circle(2pt);\draw(4.25,2) node[anchor=south]{$B$};
\filldraw[black] (5.4,2) circle(2pt);\draw(5.4,2) node[anchor=north west]{$G'$};
\filldraw[black] (8.5,4) circle(2pt);\draw(8.5,4) node[anchor=south]{$C$};
\filldraw[black] (8.5,3) circle(2pt);\draw(8.5,3) node[anchor=north ]{$H'$};
\filldraw[black] (10,4) circle(2pt);\draw(10,4) node[anchor=north west]{$C'$};
\filldraw[black] (8.5,3.6) circle(2pt);\draw(8.5,3.5) node[anchor=west]{$T$};
\draw(0,0) node[anchor=east]{$O'$};
\draw(19,8) node[anchor=west]{$P'$};
\end{tikzpicture}
\caption{}
\label{fig:corner}
\end{figure}

\begin{lemma}\label{lemma:similar path}
 (1)
The points $B, C$ are above $\overline{O'P'}$, and $B'$ has coordinates $(\alpha(k),\alpha(k-1)-1)$ and is below $\overline{O'P'}$, for all $k\ge 0$; $G'$ is below $\overline{O'P'}$ for all $k\ge1$; $H'$ is below $\overline{O'P'}$ for $k\ge 1$ in the case $\min(b,c)>1$, and for $k\ge 2$ in the case $c=1$.

(2) The path $BG'H'C$ has the same shape as $\mathcal{D}^{\alpha(k)\times\alpha(k-1)}$ for $k\ge 1$ in the case $\min(b,c)>1$, and for $k\ge 2$ in the case $c\ge 1$.
\end{lemma}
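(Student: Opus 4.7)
For part~(1), I would observe that a lattice point $(x,y)$ lies above, on, or below $\overline{O'P'}$ according to the sign of $y\beta(k)-x\beta(k-1)$. The main engine is the identity $\alpha(k-1)\beta(k)-\alpha(k)\beta(k-1)=\delta(b,c)$ from~\eqref{4.6:eq5}, which lets me reduce this signed quantity to an explicit expression in $\delta(b,c)$, $\beta(k-1)$, and $\beta(k)$ at each of the five points: it is $\delta(b,c)$ at $B$, $2\delta(b,c)$ at $C$, $\delta(b,c)-\beta(k-1)$ at $G'$, $2\delta(b,c)-\beta(k)$ at $H'$, and $\delta(b,c)-\beta(k)$ at $B'$. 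For $B'$ I first identify $B'=(\alpha(k),\alpha(k-1)-1)$ via the standard formula $h=\lfloor x\beta(k-1)/\beta(k)\rfloor$ for the height of $\mathcal{D}^{\beta(k)\times\beta(k-1)}$ at horizontal position $x$, together with $\alpha(k)\beta(k-1)/\beta(k)=\alpha(k-1)-\delta(b,c)/\beta(k)$. All position claims then reduce to the three inequalities $\delta(b,c)<\beta(k-1)$, $\delta(b,c)<\beta(k)$, and $2\delta(b,c)<\beta(k)$ on the stated ranges of $k$; I would verify them against the initial data in the cases $\min(b,c)>1$ and $c=1$, and extend using the shared recurrence $x(k+1)=r_kx(k)-x(k-1)$. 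The restriction to $k\ge2$ for $H'$ in the $c=1$ case coincides exactly with the step at which $2\delta(b,c)$ first drops below $\beta(k)$.

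For part~(2), I would identify $BG'H'C$ with $\mathcal{D}^{\alpha(k)\times\alpha(k-1)}$ by matching heights of horizontal edges. After shifting $B$ to the origin, the $j$th horizontal edge of $BG'H'C$ should sit at height $\lfloor(j-1)\alpha(k-1)/\alpha(k)\rfloor$. For $j=1$ this is the edge $BG'$ at height $0$, automatic. For $j=2,\dots,\alpha(k)$ the corresponding edge is $u'_{\alpha(k)+j}$ of $\mathcal{D}^{\beta(k)\times\beta(k-1)}$, whose shifted height is $\lfloor(\alpha(k)+j-1)\beta(k-1)/\beta(k)\rfloor-\alpha(k-1)$; a short manipulation using $\alpha(k)\beta(k-1)=\alpha(k-1)\beta(k)-\delta(b,c)$ reduces the shape claim to the floor equality
\[\bigl\lfloor\tfrac{(j-1)\beta(k-1)-\delta(b,c)}{\beta(k)}\bigr\rfloor=\bigl\lfloor\tfrac{(j-1)\alpha(k-1)}{\alpha(k)}\bigr\rfloor\quad\text{for }j=2,\dots,\alpha(k).\]

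The hard part will be this floor equality, which says that subtracting the small positive perturbation $\delta(b,c)(j-1+\alpha(k))/(\alpha(k)\beta(k))$ from $(j-1)\alpha(k-1)/\alpha(k)$ never crosses an integer. I would deploy two ingredients: first, $\gcd(\alpha(k-1),\alpha(k))=1$, by an easy induction on the shared recurrence from coprime initial data, which forces $\{(j-1)\alpha(k-1)/\alpha(k)\}\ge1/\alpha(k)$ for the relevant $j$; second, the growth comparison $(j-1+\alpha(k))\beta(k-1)\ge(q+\alpha(k-1))\beta(k)$ with $q=\lfloor(j-1)\alpha(k-1)/\alpha(k)\rfloor$, which via~\eqref{4.6:eq5} is equivalent to the required floor equality. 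The technical subtlety is that this inequality is tightest precisely when the residue $(j-1)\alpha(k-1)\bmod\alpha(k)$ is small---at such $j$, $(q+\alpha(k-1))/(j-1+\alpha(k))$ is a particularly good rational approximation to $\alpha(k-1)/\alpha(k)$ from below, reminiscent of continued-fraction convergents---so I would treat those worst cases by descending along the recurrence to smaller $k$, where the inequality has wider margin, and then propagate.
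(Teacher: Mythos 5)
Your part~(1) is essentially the paper's own argument: both evaluate the sign of $y\beta(k)-x\beta(k-1)$ at the five points using $\alpha(k-1)\beta(k)-\alpha(k)\beta(k-1)=\delta(b,c)$ from \eqref{4.6:eq5}, and both land on the same three inequalities $\beta(k)>\delta(b,c)$, $\beta(k-1)>\delta(b,c)$, $\beta(k)>2\delta(b,c)$ on the same ranges of $k$. Your values $\delta$, $2\delta$, $\delta-\beta(k-1)$, $2\delta-\beta(k)$, $\delta-\beta(k)$ at $B,C,G',H',B'$ are correct, and the identification $B'=(\alpha(k),\alpha(k-1)-1)$ is handled the same way. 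This part is fine.

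In part~(2) there is a genuine gap. Your reduction to the floor equality is correct, and after clearing denominators via \eqref{4.6:eq5} it is exactly the assertion that no lattice point $(x,y)$ with $\alpha(k)<x<2\alpha(k)$ satisfies $x\beta(k-1)/\beta(k)<y<x\alpha(k-1)/\alpha(k)$ --- a special case of the paper's statement that no lattice point lies strictly inside the triangle $O'TC$. But you do not prove it. Your first ingredient (coprimality, hence fractional part at least $1/\alpha(k)$) only yields the sufficient condition $\delta(b,c)\,(j-1+\alpha(k))\le m\,\beta(k)$ with the trivial bound $m\ge 1$, which is not strong enough uniformly in $j$; your second ingredient is, by your own description, a restatement of the goal rather than an input. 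The remaining plan --- handle the tight cases ``by descending along the recurrence to smaller $k$, where the inequality has wider margin, and then propagate'' --- names the right mechanism but does not supply it, and the rationale ``wider margin'' is not what makes the descent close. What the paper actually does is a concrete lattice-point descent: if $(i_k,j_k)$ is a lattice point with $0<i_k<2\alpha(k)$ and $\beta(k-1)/\beta(k)<j_k/i_k<\alpha(k-1)/\alpha(k)$, then $(i_{k-1},j_{k-1})=(j_k,\,r_kj_k-i_k)$ satisfies the same two conditions at level $k-1$; iterating down to $k=0$ (resp.\ $k=1$ when $c=1$) produces a lattice point strictly inside an explicit triangle with $x$-range $(0,2)$ whose vertical section at $x=1$ is an open interval containing no integer --- a contradiction. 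To complete your proof you must either carry out this descent, checking that the map preserves both slope inequalities and the bound $0<i<2\alpha(\cdot)$ (note you would have to enlarge your region from $\alpha(k)<x<2\alpha(k)$ to $0<x<2\alpha(k)$, since the descent does not preserve the lower bound $\alpha(k)<x$), or replace it by a genuine proof of the inequality $\delta(b,c)\,(j-1+\alpha(k))\le m_j\,\beta(k)$ for all $j$.
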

\begin{proof}
(1) It reduces to the following facts which are easy to check:

 $\beta(k)>\delta(b,c)$  for $k\ge 0$ in both cases $\min(b,c)>1$ and $c=1$;

 $\beta(k)>2\delta(b,c)$ for $k\ge 1$ in the case $\min(b,c)>1$, or for $k\ge 2$ in the case $c=1$.

\noindent Indeed, the statement that $B, C$ are above $\overline{O'P'}$ is equivalent to the inequality $\alpha(k-1)/\alpha(k)>\beta(k-1)/\beta(k)$, which is equivalent to $\delta(b,c)>0$; the statement that $B'$ has coordinates $(\alpha(k),\alpha(k-1)-1)$ and is below $\overline{O'P'}$ follows from the inequality $(\alpha(k-1)-1)/\alpha(k)<\beta(k-1)/\beta(k)$, which is equivalent to $\beta(k)>\delta(b,c)$; the statement that $G'$ is below $\overline{O'P'}$ is equivalent to $\alpha(k-1)/(\alpha(k)+1)<\beta(k-1)/\beta(k)$, which is equivalent to $\beta(k-1)>\delta(b,c)$; the statement that $H'$ is below $\overline{O'P'}$ is equivalent to $(2\alpha(k-1)-1)/(2\alpha(k))<\beta(k-1)/\beta(k)$, which is equivalent to $\beta(k)>2\delta(b,c)$.
 
(2) Based on (1), it suffices to show that there is no lattice points that lies strictly inside the triangle $O'_kT_kC_k$ for all $k\ge1$ in case $\min(b,c)>1$, and all $k\ge2$ in case $c=1$. Aiming at the contradiction, assume that $(i_k,j_k)$ is such a point. In other words,
$\beta(k-1)/\beta(k)<j_k/i_k<\alpha(k-1)/\alpha(k)$, and $0<i_k<2\alpha(k)$. If $k>0$ in case $\min(b,c)>1$, or $k>1$ in case $c=1$,
we would have that the lattice point $(i_{k-1},j_{k-1})=(j_k,r_kj_k-i_k)$ is strictly inside the triangle $O'_{k-1}T_{k-1}C_{k-1}$. Indeed, 
 $$0<i_{k-1}=j_k<i_k\alpha(k-1)/\alpha(k)<2\alpha(k)\alpha(k-1)/\alpha(k)=2\alpha(k-1),$$
 and
 $$r_k-\beta(k)/\beta(k-1)<r_k-i_k/j_k<r_k-\alpha(k)/\alpha(k-1)$$
 implies
 $$\beta(k-2)/\beta(k-1)<j_{k-1}/i_{k-1}<\alpha(k-2)/\alpha(k-1).$$
By descending $k$, we would conclude that:

(i) case $\min(b,c)>1$: there is a lattice point strictly inside the triangle $O'_0T_0C_0$ where $O'_0=(0,0)$, $T_0=(2, 2\beta(0)/\beta(1))$, $C_0=(2,2)$,   which is absurd;

(ii) case $c=1$: there is a lattice point strictly inside the triangle $O'_1T_1C_1$ where $O'_1=(0,0)$, $T_1=(2,2(b+2)/(b-1))$, $C_1=(2,4)$, which is also absurd.
\end{proof}

\begin{lemma}\label{easy lemma}
Given subpath $EF$ of $\mathcal{D}^{\alpha_k\times\alpha_{k-1}}$ or $\mathcal{D}^{\beta_k\times\beta_{k-1}}$ and a pair $(S_1,S_2)$, we define 
$$
\aligned
&f_{S_2}(EF)=r_{k-1}|(EF)_2\cap S_2|-|(EF)_1|,\\
&g_{S_1}(EF)=r_{k}|(EF)_1\cap S_1|-|(EF)_2|.\\
\endaligned
$$
Assuming $u\in S_1$ and $v\in S_2$, the following are equivalent: 

$\bullet$ there exists $A\in (E_uF_v)^\circ$ such that \eqref{0407df:comp} holds, in other words, $f_{S_2}(AF_v)=0$ or $g_{S_1}(E_uA)=0$;

$\bullet$ there exists $A\in (E_uF_v)^\circ$ such that  $f_{S_2}(AF_v)\le 0$ or $g_{S_1}(E_uA)\le 0$;
\end{lemma}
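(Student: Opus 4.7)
The forward implication is immediate because the equalities $f_{S_2}(AF_v)=0$ and $g_{S_1}(E_uA)=0$ are special cases of the inequalities $f_{S_2}(AF_v)\le 0$ and $g_{S_1}(E_uA)\le 0$. So the content lies in the reverse direction, and by the symmetry between the two summands I may assume that $f_{S_2}(A_0F_v)\le 0$ for some $A_0\in(E_uF_v)^\circ$; the task is then to produce $A'\in(E_uF_v)^\circ$ with $f_{S_2}(A'F_v)=0$. The plan is the same intermediate-value argument used in the proof of Lemma \ref{extremal pair compatible}.

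The key bookkeeping is the list of possible one-step changes in $f_{S_2}(AF_v)$ as $A$ advances by one lattice point along the subpath: crossing a horizontal edge removes one element from $(AF_v)_1$, so $f_{S_2}$ increases by $1$; crossing a vertical edge in $S_2$ removes one element from $(AF_v)_2\cap S_2$, so $f_{S_2}$ decreases by $r_{k-1}$; crossing a vertical edge outside $S_2$ leaves $f_{S_2}$ unchanged. In particular the only positive increment available is $+1$. Next, let $L$ denote the lower endpoint of $v$; then $(LF_v)=\{v\}$ with $v\in S_2$, so $f_{S_2}(LF_v)=r_{k-1}>0$. Since $u$ is horizontal and $v$ is vertical, and a Dyck path does not revisit lattice points, $L$ is distinct from both $E_u$ and $F_v$, hence $L\in(E_uF_v)^\circ$; in particular $A_0\ne L$, and $A_0$ lies strictly before $L$ on the subpath.

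Walking $A$ forward from $A_0=B_0,B_1,\ldots,B_n=L$ and setting $\varphi(i)=f_{S_2}(B_iF_v)$, we have $\varphi(0)\le 0$, $\varphi(n)>0$, and each increment $\varphi(i)-\varphi(i-1)$ lies in $\{-r_{k-1},0,+1\}$. Taking $i$ to be the smallest index with $\varphi(i)>0$ forces $i\ge 1$ and makes the increment $\varphi(i)-\varphi(i-1)$ equal to $+1$; together with $\varphi(i-1)\le 0$ this yields $\varphi(i-1)=0$, so $A':=B_{i-1}$ does the job. The symmetric argument for the $g_{S_1}$ case instead uses the right endpoint $F_u$ of $u$ (for which $g_{S_1}(E_uF_u)=r_k>0$) together with the observation that the only negative increment of $g_{S_1}(E_uA)$ under forward advance of $A$ is $-1$. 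I do not expect a real obstacle here; the only mild bookkeeping is confirming that the auxiliary points $L$ and $F_u$ really lie in $(E_uF_v)^\circ$, which follows immediately from $u\ne v$ and the non-repetition of lattice points along the Dyck path.
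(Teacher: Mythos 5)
Your proof is correct and is exactly the argument the paper intends: the paper dismisses this lemma as ``Straightforward,'' and the intended justification is the same discrete intermediate-value argument it spells out in the proof of Lemma~\ref{extremal pair compatible} (increments of $f_{S_2}(AF_v)$ lie in $\{+1,0,-r_{k-1}\}$, and $f_{S_2}(LF_v)=r_{k-1}>0$ at the lower endpoint $L$ of $v$, so the first sign change must pass through $0$). Your additional checks that $L$ and $F_u$ lie in $(E_uF_v)^\circ$ and that $A_0$ precedes them are accurate bookkeeping that the paper leaves implicit.
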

\begin{proof}
Straightforward.
\end{proof}
\begin{proof}[Proof of Lemma \ref{mu}]
It is easy to check that $|S'_1|=|S_1|+\alpha(k)$ and $|S'_2|=|S_2|+\alpha(k+1)$, and $\mu$ is obviously injective if it is well-defined. Thus we are left to show that $(S'_1,S'_2)$ is compatible. 

If $k=0$, or $k=1$ for the case $c=1$, then either $\alpha(k)$ or $\alpha(k-1)$ is equal to 1, thus any compatible pair $(S_1,S_2)$ in $\mathcal{D}^{\alpha(k)\times \alpha(k-1)}$ satisfies either $S_1=\emptyset$ or $S_2=\emptyset$. We immediately conclude that $(S'_1,S'_2)$ are compatible using Lemma \ref{support comparison}.

For the rest of the proof we assume $k\ge 1$ in case $\min(b,c)>1$ or $k\ge 2$ in case $c=1$. Then Lemma \ref{lemma:similar path} (2) can apply and we shall use the notation from there.

We will prove by contradiction by assuming that $(S'_1,S'_2)$ is not compatible. Then by Lemma \ref{easy lemma}, there exist $u'\in S'_1$ and $v'\in S'_2$ such that $f_{S'_2}(AF_{v'})> 0$ and $g_{S'_1}(E_{u'}A)>0$ for every $A\in (E_{u'}F_{v'})^\circ$.

There are four cases to be considered. 

\medskip

\noindent\textbf{Case 1}: both $u',v'$ are on the subpath $B'C'$. Thus we can denote $u'=u'_{i+\alpha(k)}$, $v'=v'_{j+\alpha(k-1)}$ for some $1\le i\le \alpha(k)+\lceil 2\delta(b,c)/\beta(k-1)\rceil$, $0\le j\le \alpha(k-1)$.  
\smallskip

\noindent\textbf{Subcase 1-1}: $u'$ precedes $v'$. Since $(S_1,S_2)$ is compatible, 
without loss of generality we assume the existence of $A\in (E_{u_i}F_{v_j})^\circ$ such that 
$$f_{S_2}(AF_{v_j})=0.$$
Identify $\mathcal{D}^{\alpha(k)\times \alpha(k-1)}$ with $BG'H'C$ thanks to Lemma \ref{lemma:similar path} (2). 
 Since 
$|(AF_{v'})_1|\ge|(AF_{v_j})_1|$ and $|(AF_{v'})_2\cap S'_2|=|(AF_{v_j})_2\cap S_2|$, we get a contradiction
$$f_{S'_2}(AF_{v'})\le f_{S_2}(AF_{v_j})=0.$$

\noindent\textbf{Subcase 1-2}: $u'$ is after $v'$. Then take $A=P'\in (E_{u'}F_{v'})^\circ$,
$$\aligned
&g_{S'_1}(E_{u'}A)=r_k|(E_{u'}P')_1\cap S'_1|-|(E_{u'}P')_2|\\
&\le r_{k}(\alpha(k)-i+1)-\bigg(\beta(k-1)-\Big\lfloor \frac{(\alpha(k)+i-1)\beta(k-1)}{\beta(k)}\Big\rfloor\bigg)\\
&\le r_{k}(\alpha(k)-i+1)-\beta(k-1)+\frac{(\alpha(k)+i-1)\beta(k-1)}{\beta(k)}\\
&=\frac{(i-1)\big(\beta(k-1)-r_k\beta(k)\big)-\beta(k)\big(\beta(k-1)-r_k\alpha(k)\big)+\alpha(k)\beta(k-1)}{\beta(k)}
\endaligned
$$
Since $\beta(k-1)-r_k\beta(k)\stackrel{\eqref{a}}{=}-\beta(k+1)<0$, and $\beta(k-1)-r_k\alpha(k)\stackrel{\eqref{4.6:eq2}}{=}\alpha(k-1)$, we again get a contradiction
$$
g_{S'_1}(E_{u'}A)
\le\frac{-\alpha(k-1)\beta(k)+\alpha(k)\beta(k-1)}{\beta(k)}\stackrel{\eqref{4.6:eq5}}{=}-\frac{\delta(b,c)}{\beta(k)}<0.$$
\medskip

\noindent\textbf{Case 2}: $u'$ is on $B'C'$ but $v'$ is not. Assume $u'=u'_{i+\alpha(k)}, v'=v'_j$ where $1\le i\le \alpha(k), 2\alpha(k-1)+1\le j\le \beta(k-1)$. 
Then we consider the extremal pair $(\tilde{S}'_1,\tilde{S}'_2)$ on $\mathcal{D}^{\beta(k)\times\beta(k-1)}$ of size 
$$(|(F_{u'}C')_1\cap S'_1|+i+\alpha(k);|(F_{u'}C')_2\cap S'_2|+\beta(k-1)-2\alpha(k-1)).$$ (In other words, $(\tilde{S}'_1,\tilde{S}'_2)$ is obtained from $(S'_1,S'_2)$ by moving forward all edges in $(F_{u'}C')_1\cap S'_1$ so that they are immediately after $F_{u'}$, moving backward all edges in $(F_{u'}C')_2\cap S'_2$ so that they are immediately preceding $C'$, and adding all horizontal edges on $B'E_{u'}$ to $S'_1$ and removing all vertical edges on $B'E_{u'}$ from $\tilde{S}'_2$.)

We claim that $(\tilde{S}'_1,\tilde{S}'_2)$ is not compatible. Indeed, for any $A\in (E_{u'}F_{v'})^\circ$, we have
$$|(E_{u'}A)_1\cap \tilde{S}'_1|\ge |(E_{u'}A)_1\cap S'_1|,\quad
|(AF_{v'})_2\cap \tilde{S}'_2|\ge |(AF_{v'})_2\cap S'_2|,$$
thus
$$
g_{\tilde{S}'_1}(E_{u'}A)\ge g_{S'_1}(E_{u'}A)>0,\quad f_{\tilde{S}'_2}(AF_{v'})\ge f_{S'_2}(AF_{v'})>0.
$$

Similarly, we consider the extremal pair $(\tilde{S}_1,\tilde{S}_2)$ on $\mathcal{D}^{\alpha(k)\times\alpha(k-1)}$ of size 
$$(|(F_{u_i}C)_1\cap S_1|+i;|(F_{u_i}C)_2\cap S_2|).$$
(In other words, $(\tilde{S}_1,\tilde{S}_2)$ is obtained from $(S_1,S_2)$ by moving forward all horizontal edges after $u_i$ in $S_1$ so that they become immediately after $u_i$, moving backward all horizontal edges after $u_i$ in $S_2$ to the northeast corner of the Dyck path, and adding all horizontal edges in front of $u_i$ to $S_1$ and removing all vertical edges in front of $u_i$ in $S_2$.)

We claim that $(\tilde{S}_1,\tilde{S}_2)$ is compatible. To see this,
 let 
$$\aligned
&U_1=(BF_{u_i})_1\cap S_1,\quad U_2=(F_{u_i}C)_1\cap S_1=S_1\setminus U_1,\quad W_1=\{u_1,\dots,u_i\}\cup U_2,\\
&V_1=(BF_{u_i})_2\cap S_2,\quad V_2=(F_{u_i}C)_2\cap S_2=S_2\setminus V_1.\\
\endaligned
$$ 
We shall show that $(W_1,V_2)$ is compatible, i.e., every $u_j\in W_1$ and $v_\ell\in V_2$ are separated by $(W_1,V_2)$.
First, consider the case $j\le i$. Since  $(S_1,S_2)$ is compatible, there exists $A\in(E_{u_i}F_{v_\ell})^\circ$ with either $f_{S_2}(AF_{v_\ell})=0$ or $g_{S_1}(E_{u_i}A)=0$. On the other hand,
$g_{S_1}(E_{u_i}A)\ge g_{S'_1}(E_{u'}A)>0$.  So $f_{V_2}(AF_{v_\ell})=f_{S_2}(AF_{v_\ell})=0$.
Next, consider the case $j>i$. Since $(W_1,V_2)$ and $(S_1,S_2)$ coincides on the subpath $E_{u_j}F_{v_\ell}$, a lattice point $A$ that satisfies \eqref{0407df:comp} for $(S_1,S_2)$ also satisfies \eqref{0407df:comp} for $(W_1,V_2)$.
Since $|W_1|=|\tilde{S}_1|$, $|V_2|=\tilde{S}_2$, Proposition \ref{prop:support} implies that $(\tilde{S}_1,\tilde{S}_2)$ is compatible.

Thus we have a compatible extremal pair $(\tilde{S}_1,\tilde{S}_2)$ and an incompatible extremal pair $(\tilde{S}'_1,\tilde{S}'_2)$ which contradict Lemma  \ref{support comparison}.
\medskip

\noindent\textbf{Case 3}: $v'$ is on $B'C'$ but $u'$ is not. The proof of this case is similar to Case 2.
\medskip

\noindent\textbf{Case 4}: neither $u'$ nor $v'$ is on $B'C'$. We let $(\tilde{S}'_1,\tilde{S}_2)$ be the extremal pair of size $(|S'_1|;|S'_2|)$ (which is obtained from $(S'_1,S'_2)$ by moving all horizontal edges in $B'C'\cap S'_1$ forward so that they are immediately after $B'$, and move all vertical edges in $B'C'\cap S'_2$ backward so that they are immediately in front of $C'$). Then $(\tilde{S}'_1,\tilde{S}'_2)$ is not compatible because for any $A\in(E_{u'}F_{v'})^\circ$,  
$$
\aligned
&f_{\tilde{S'_2}}(AF_{v'})\ge f_{S'_2}(AF_{v'})>0,\\
&g_{\tilde{S'_1}}(E_{u'}A)\ge g_{S'_1}(E_{u'}A)>0.\\
\endaligned$$
Similarly define $(\tilde{S}_1,\tilde{S}_2)$ to be the extremal pair of size $(|S_1|;|S_2|)$.
Then by Proposition \ref{prop:support}, $(\tilde{S}_1,\tilde{S}_2)$ is compatible. This again contradicts Lemma \ref{support comparison}.
\medskip

Since in all the four cases we get contradictions, $(S'_1,S'_2)$ must be compatible. This completes the proof of Lemma \ref{mu}.
\end{proof}

\section*{Acknowledgments} 
We are grateful to
Dylan Rupel  for valuable discussions and  many useful comments. 
We thank Gregg Musiker and David Speyer for their interest in this problem.

\end{document}